\newcommand{\set}[1]{\left\{ #1 \right\}}
\newcommand{\va}[1]{\left| #1 \right|}
\newcommand{\sm}{\setminus}
\newcommand{\CC}{\mathbb{C}}
\newcommand{\OO}{\mathcal{O}}
\newcommand{\QQ}{\mathbb{Q}}
\newcommand{\PP}{\mathbb{P}}
\newcommand{\et}{\textnormal{\'et}}
\newcommand{\RR}{\mathbb{R}}
\newcommand{\ZZ}{\mathbb{Z}}
\newcommand{\Pic}{\textnormal{Pic}}
\newcommand{\Ker}{\textnormal{Ker}}
\newcommand{\wt}[1]{\widetilde{#1}}
\newcommand{\ca}[1]{{\mathcal{#1}}}
\newcommand{\bb}[1]{{\mathbb{#1}}}
\newcommand{\mr}[1]{{\mathscr{#1}}}
\let\rm\relax 
\newcommand{\rm}[1]{{\mathrm{#1}}}
\DeclareSymbolFont{bbm}{U}{bbm}{m}{n}
\DeclareSymbolFontAlphabet{\mathbbm}{bbm}
\numberwithin{equation}{section}
\newtheorem{theorem}{Theorem}[section]
\newtheorem{lemma}[theorem]{Lemma}
\newtheorem{proposition}[theorem]{Proposition}
\newtheorem{corollary}[theorem]{Corollary}
\theoremstyle{definition}
\newtheorem{definition}[theorem]{Definition}
\newtheorem{remark}[theorem]{Remark}
\newtheorem{notation}[theorem]{Notation}
\theoremstyle{remark}
\newcommand{\F}{\mathbb{F}}
\newcommand{\Q}{\mathbb{Q}}
\newcommand{\Qbar}{\overline{\Q}}
\newcommand{\Qtbar}{\overline{\Q(t)}}
\newcommand{\Qtbarsmash}{\smash{\overline{\mathbb{Q}(t)}}}
\newcommand{\Z}{\mathbb{Z}}
\newcommand{\Ag}{\mathcal{A}_g}
\newcommand{\Agr}{\mathcal A_{g,[r]}}
\newcommand{\Agdeltar}{\mathcal A_{g,\delta,[r]}}
\DeclareMathOperator{\GL}{GL}
\DeclareMathOperator{\SL}{SL}
\DeclareMathOperator{\GSp}{GSp}
\DeclareMathOperator{\Sp}{Sp}
\DeclareMathOperator{\Gal}{Gal}
\DeclareMathOperator{\End}{End}
\DeclareMathOperator{\Hom}{Hom}
\begin{document}
\title[Powers of abelian varieties not isogenous to a Jacobian]{Powers of abelian varieties over $\overline{\mathbb{Q}(t)}$ not isogenous to a Jacobian}

\author{Olivier de Gaay Fortman and Ananth N.\ Shankar}

\date{\today}

\maketitle

\vspace{-10mm}

\begin{abstract}
    We prove the existence of abelian varieties over $\Qtbar$ with no power isogenous to a Jacobian. Moreover, given a positive integer $N$, we prove the existence of abelian varieties over $\Qtbar$ with maximal monodromy such that the $n$th power is not isogenous to a Jacobian for $n \leq N$. We make use of an Arakelov inequality established by Lu and Zuo, as well as intersection theoretic methods, to prove our main results. 
\end{abstract}

\section{Introduction}

Every abelian variety $A$ over an algebraically closed field is dominated by a Jacobian, but not always isogenous to one: over $\CC$ this follows from a countability argument, over $\Qbar$ this was proven by Chai--Oort and Tsimerman \cite{chaioort, tsimerman-jacobian}, and over $\overline{\F_p(t)}$ this was proven 
by the second named author and Tsimerman  
\cite{shankartsimerman-Fptbar}.\footnote{The case of finite fields is expected to be different as suggested by heuristics, independently offered by \cite{shankar-tsimerman-heuristics} and Poonen.}

This leaves open the question whether some power of $A$ is isogenous to a Jacobian. 
In \cite{dGFSchreieder-2025}, the first named author and Schreieder 
showed that the answer is again no in general: for any $g \geq 4$, 
there exists a 
complex abelian variety of dimension $g$ with no power isogenous to a Jacobian, and for $g=5$ one can find intermediate Jacobians of smooth cubic threefolds among such examples. The authors ask whether one can find  
abelian varieties 
over $\Qbar$ with no power isogenous to a Jacobian, see \cite[Remark 1.6]{dGFSchreieder-2025}. 

The first main result of this paper says that if $g \geq 16$ is a power of $2$, then such examples exist at least over $\smash{\Qtbar}$. By a Jacobian, we mean the Jacobian of a projective stable curve of compact type. 

\begin{theorem} \label{theorem:special}
    Let $g=2^d$ with $d \geq 4$. Then there exists an abelian variety $A$ of dimension $g$ over $\overline{\mathbb{Q}(t)}$ with no power $A^n$ isogenous to a Jacobian. 
\end{theorem}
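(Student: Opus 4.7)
The strategy is to produce $A$ as the generic fibre of a one-parameter family $\pi\colon\mathcal A\to C$ over a smooth projective curve $C/\overline{\mathbb Q}$ that saturates the classical Faltings--Arakelov inequality, and then to argue that if some power $A^n$ were isogenous to a Jacobian of a compact-type curve, the resulting family of smooth curves would violate the strictly sharper Arakelov inequality for families of curves due to Lu and Zuo.

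\textbf{Step 1: A maximal Mumford-type family.} For each $d\ge 4$, a Mumford-type Shimura datum gives a semistable abelian scheme $\pi\colon\mathcal A\to C$ of relative dimension $g=2^d$ over a smooth projective curve $C/\overline{\mathbb Q}$, with bad-reduction set $S\subset C$, whose generic Mumford--Tate group acts irreducibly on the fibrewise $\mathrm H^1$ and whose Hodge bundle saturates Faltings's bound:
\[
\deg\pi_*\Omega^1_{\mathcal A/C}\;=\;\tfrac{g}{2}\,\chi,\qquad \chi:=2g(C)-2+|S|.
\]
Let $A$ be the geometric generic fibre; it is an abelian variety over $\Qtbar$.

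\textbf{Step 2: Extracting a curve family.} Suppose for contradiction that $A^n$ is isogenous over $\Qtbar$ to $\Jac(D)$ for some compact-type curve $D=\bigcup D_i$ with $D_i$ smooth of genus $g_i$, so $\sum g_i=ng$. After a finite base change $f\colon C'\to C$, which one can arrange to be \'etale over $C\setminus S$, there exist smooth families $\rho_i\colon\mathcal D_i\to C'$ such that $f^*\mathcal A^n\sim\prod_i\Jac(\mathcal D_i/C')$ as abelian $C'$-schemes. By Serre--Tate, each $\Jac(\mathcal D_i/C')$ has good reduction outside $f^{-1}(S)$; since $\rho_i$ is smooth, the singular fibres of its stable model lie in $f^{-1}(S)$. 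Degrees of Hodge bundles of isogenous abelian schemes agree, and they are additive in products, so
\[
n\cdot\deg(f)\cdot\tfrac{g}{2}\,\chi\;=\;\sum_i\deg\rho_{i,*}\omega_{\mathcal D_i/C'}.
\]

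\textbf{Step 3: Lu--Zuo and contradiction.} For a semistable family of smooth curves of genus $g_i\ge 2$ the Lu--Zuo Arakelov inequality gives
\[
\deg\rho_{i,*}\omega_{\mathcal D_i/C'}\;\le\;\lambda(g_i)\,\chi',\qquad \lambda(g_i)<\tfrac{g_i}{2},
\]
where $\chi'=2g(C')-2+|f^{-1}(S)|$. By Riemann--Hurwitz applied to our \'etale-over-$C\setminus S$ cover $f$, one has $\chi'=\deg(f)\cdot\chi$. Summing over $i$ (assuming for the moment that every $g_i\ge 2$) gives
\[
n\cdot\deg(f)\cdot\tfrac{g}{2}\,\chi\;\le\;\Bigl(\sum_i\lambda(g_i)\Bigr)\deg(f)\,\chi\;<\;\tfrac{ng}{2}\deg(f)\,\chi,
\]
contradicting Step 2 once $\chi>0$, which we arrange by the choice of $C$ and $S$.

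\textbf{Main obstacle.} The decisive technical point is to rule out small-genus factors $\mathcal D_i$ with $g_i\in\{0,1\}$, to which the Lu--Zuo inequality does not apply and which could a priori absorb a substantial part of the Hodge bundle. Here one uses that the Mumford-type construction forces $A$ to be geometrically simple with irreducible monodromy, so that any product decomposition of $f^*\mathcal A^n$ up to isogeny must distribute the Hodge bundle evenly among isogenous simple factors; in particular, no isotrivial or elliptic factor can carry a non-trivial share, so the $g_i\le 1$ components may be discarded. The hypothesis $g=2^d\ge 16$ enters both to guarantee the existence of a Mumford-type Shimura curve of the required dimension and to secure the numerical margin between $g/2$ and $\lambda(g_i)$ in the final inequality.
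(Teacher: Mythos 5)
Your high-level strategy matches the paper's: produce $A$ as the geometric generic fibre of a Mumford-type Shimura curve family that saturates the Arakelov bound, assume some power is a Jacobian, and contradict the strict Lu--Zuo inequality for curves in the Torelli locus. However, there are two genuine gaps in the proposal that would need to be repaired.

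\textbf{Step 2 asserts more than is available.} You claim that after a finite base change $f\colon C'\to C$ there exist \emph{smooth families of curves} $\rho_i\colon\mathcal D_i\to C'$ with $f^*\mathcal A^n\sim\prod_i\Jac(\mathcal D_i/C')$. Spreading the isogeny $A_{\bar\eta}^n\to JX$ out to an isogeny of abelian schemes over (a cover of) $C$ is routine, but spreading the \emph{curve} $X$ out to a smooth relative curve over $C'$ is not: the Torelli map is neither surjective onto the Schottky locus in families nor locally a fibration without further work. The paper sidesteps this entirely. It instead observes that after passing to the quotient $\tilde A^n/\tilde M$ and choosing level structure, the Shimura curve maps to a special curve $C'\subset\mathcal A_{ng,[s]}$ whose geometric generic point lies in the Torelli locus, so $C'$ is contained in the closed Torelli locus and meets the open Torelli locus. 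That is exactly the hypothesis of \cite[Theorem 1.4]{luzuo-2019}, applied to the Hodge line bundle of the universal abelian family over $C'$ --- no family of curves is needed. Your Step 3 inequality $\deg\rho_{i,*}\omega_{\mathcal D_i/C'}\le\lambda(g_i)\chi'$ should be replaced by the Torelli-locus version of Lu--Zuo applied directly to $C'\subset\mathcal A_{ng,[s]}$, and then the extension of the isogeny kernel needs to be controlled (the paper uses Remark 3.4/Deligne to pass to a smooth model and extends the kernel as a finite étale subgroup scheme).

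\textbf{The reduction to a single high-genus component is informal.} You state as a ``main obstacle'' that factors with $g_i\le 1$ might absorb part of the Hodge bundle and then wave it away via irreducibility of monodromy. The paper handles this cleanly and earlier: since $A_{\bar\eta}$ is simple (Lemma \ref{simple}), Poincar\'e reducibility forces each nontrivial factor $JX_i$ of $A_{\bar\eta}^n$ to be isogenous to $A_{\bar\eta}^{n_i}$, hence of dimension $n_i g$, so one may simply replace $n$ by some $n'\le n$ and assume $X$ is a single smooth connected curve. In particular the dimension of each nontrivial factor is automatically $\ge g=2^d\ge 16>11$, which also dispatches your worry about the $g_i\in\{2,\dots,11\}$ range where Lu--Zuo does not apply. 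Finally, note that Mumford-type Shimura curves (attached to a quaternion algebra ramified at all but one infinite place) are \emph{compact}, so your bad-reduction set $S$ is actually empty and the $\chi=2g(C)-2+|S|$ bookkeeping simplifies; the hypothesis $d\ge 4$ is there to ensure $g>11$ for Lu--Zuo, not to ensure existence of the Shimura curve (which holds for all $d\ge 2$).
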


Let $\Ag$ be the moduli space of principally polarized abelian varieties of dimension $g$. A point $x \in \Ag$ is called \emph{Hodge generic} if $x$ is not contained in a special subvariety of $\Ag$ of positive codimension---equivalently, the associated Mumford--Tate group is $\GSp_{2g}$. 
In \cite{tsimerman-jacobian}, Tsimerman showed that one can find CM abelian varieties of dimension $g \geq 4$ which are not isogenous to a Jacobian; in \cite{masser-zannier} and \cite{shankartsimerman-Fptbar}, the authors (using different methods) showed that there are also Hodge generic $\Qbar$ points of $\Ag$ with that property. 

As the proof of Theorem \ref{theorem:special} shows, for $g = \smash{ 2^d} \geq 16$, there are $x\in \smash{\Ag(\Qtbar)}$, the geometric generic point of a special curve $C \subset \Ag$, such that $A_x$ has no power isogenous to a Jacobian. It is natural to ask whether there exist Hodge generic points $x \in \smash{\Ag(\Qtbar)}$ with that same property. Our second main result says that at least for bounded powers, the answer is yes.

\begin{theorem} \label{theorem:generic}
Let $g = 2^d \geq 16$ and $N \geq 1$. Then there exists a Hodge generic point $x = [A] \in \Ag(\Qtbar)$ such that the corresponding abelian variety $A$ over $\Qtbar$ has no power $A^n$ with $n \leq N$ which is isogenous to a Jacobian.
\end{theorem}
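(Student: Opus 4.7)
The strategy is to realize $A$ as the geometric generic fiber of a family $\cA \to D$ over a Hodge-generic curve $D \subset \Ag$, obtained by deforming the special-curve construction in the proof of Theorem \ref{theorem:special}. Recall that in Theorem \ref{theorem:special}, the abelian variety is produced as the geometric generic fiber of a family $\cA \to C$ over a specific special curve $C \subset \Ag$ chosen to make the Lu--Zuo Arakelov inequality (combined with intersection-theoretic bounds) obstruct an isogeny $\cA_{\bar\eta}^n \sim J$ for \emph{every} $n$. Here we relax ``every $n$'' to ``every $n \leq N$'' in exchange for allowing maximal monodromy.

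\textbf{Step 1 (Setup).} Place $[C]$ inside a smooth irreducible parameter space $\mathscr{H}$ of smooth curves $D \subset \Ag$ of the same cohomology class as $C$ (for instance, in a component of a suitable Hilbert scheme of a toroidal compactification $\Agtor$). Let $\mathscr{H}^{\mathrm{mon}} \subset \mathscr{H}$ denote the open locus of $D$ whose universal family $\cA_D \to D$ has maximal monodromy group $\GSp_{2g}$. This locus is non-empty: the obstructions to maximal monodromy correspond to $D$ being contained in a proper special subvariety of $\Ag$, and a generic deformation of $C$ within $\mathscr{H}$ is not contained in any such subvariety.

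\textbf{Step 2 (Bad loci).} For each $1 \leq n \leq N$, let $\mathscr{B}_n \subset \mathscr{H}$ be the locus of $D$ such that $\cA_{D,\bar\eta}^n$ is isogenous to some Jacobian over $\Qtbar$. After base change, such an isogeny extends (on an open subset of $D$) to an isogeny of families $\cA_D^n \sim \Jac(X/D)$ for some family $X \to D$ of stable curves of compact type of genus $ng$, and in particular yields the Hodge-bundle relation
\[
n \cdot \deg f_\ast \Omega^1_{\cA_D/D} \;=\; \deg g_\ast \omega_{X/D}.
\]
Applying the Arakelov inequality of Lu--Zuo to the family $X \to D$, together with the intersection-theoretic bounds on the Hodge bundle degree of $\cA_D \to D$ used in the proof of Theorem \ref{theorem:special}, I expect to conclude that each $\mathscr{B}_n$ is contained in a countable union of proper closed subvarieties of $\mathscr{H}$. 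Since $N$ is finite, $\bigcup_{n=1}^N \mathscr{B}_n$ is then a countable union of proper closed subvarieties, and so cannot cover the non-empty open set $\mathscr{H}^{\mathrm{mon}}$. Pick a $\Qbar$-point $D \in \mathscr{H}^{\mathrm{mon}} \setminus \bigcup_{n=1}^N \mathscr{B}_n$; then $A := \cA_{D, \bar\eta}$ is a Hodge-generic abelian variety over $\Qtbar$ no power of which (for $n \leq N$) is isogenous to a Jacobian.

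\textbf{Main obstacle.} The key technical difficulty is Step 2: the Arakelov--intersection inequality in the proof of Theorem \ref{theorem:special} is sharp only on the special curve $C$, and one must quantify precisely how much slack is introduced upon passing to a generic $D \in \mathscr{H}$, then verify that the resulting (weaker) inequality still rules out Jacobian-isogeny for each fixed $n \leq N$. The numerical room afforded by the hypothesis $g = 2^d \geq 16$ should suffice for this, while the restriction $n \leq N$ reflects the fact that the slack in the inequality scales with $n$ and eventually overwhelms the obstruction as $n \to \infty$. A secondary obstacle is to ensure that $\mathscr{H}^{\mathrm{mon}}$ is genuinely non-empty—this requires confirming that the deformation from the special curve $C$ to a Hodge-generic curve can be carried out within a single irreducible parameter space of the same cohomology class.
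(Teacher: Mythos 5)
Your approach is genuinely different from the paper's, and it has two substantial gaps, both of which the paper's actual argument is specifically designed to work around.

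\textbf{Gap 1: The Arakelov inequality does not close by itself on a deformed curve.} The sharpness you invoke -- that the Arakelov bound ``is sharp only on the special curve $C$'' -- is precisely why your Step 2 fails rather than merely needing to be ``quantified.'' The mechanism in the proof of Theorem \ref{theorem:special} is that, for a Shimura curve $C$, M\"oller's theorem gives the \emph{equality} $\deg L_{\cA_C/C} = (g/2)\deg\Omega_C$, which then directly contradicts the strict Lu--Zuo inequality $\deg L_{JX/C'} < (ng/2)\deg\Omega_{C'}$ after pulling back through the isogeny. For a Hodge-generic curve $D$ one only has the Faltings inequality $\deg L_{\cA_D/D}\leq(g/2)\deg\Omega_D$, which points in the \emph{same} direction as Lu--Zuo; combining the two produces no contradiction at all, for any $n$. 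So the bad locus $\mathscr{B}_n$ is not seen by this numerology, and the claim that it is contained in a countable union of proper closed subvarieties is not a quantitative refinement awaiting verification but a statement with no visible proof strategy. The role of the hypothesis $n\leq N$ in the actual proof is completely different: it bounds how many hypersurfaces $H_{n,i}$ (cutting out the Torelli loci $\ca T_{ng,[r]}$ for $n\leq N$) one must dominate in degree, so that $\deg(C)$ can be chosen larger than the fixed constant $d$ in \eqref{align:def:d} times $\deg(S)$; it is not about slack in an Arakelov inequality scaling with $n$.

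\textbf{Gap 2: A countable union argument does not produce a $\Qbar$-point.} Even granting Step 2, the conclusion ``pick a $\Qbar$-point $D\in\mathscr{H}^{\mathrm{mon}}\setminus\bigcup_n\mathscr{B}_n$'' is not justified: a countable union of proper closed $\Qbar$-subvarieties of $\mathscr{H}$ can a priori cover all of $\mathscr{H}(\Qbar)$. This is exactly the distinction, noted in the introduction, between the $\CC$ case (where a countability argument suffices) and the $\Qbar$ and $\Qtbar$ cases (which required Chai--Oort, Tsimerman, et al.). Your argument, if it worked, would prove a stronger $\CC$-statement but would not produce the desired arithmetic point.

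\textbf{What the paper does instead.} Rather than deforming $C$ through a Hilbert scheme, the paper builds a \emph{fixed} auxiliary surface $S\subset\Agr$ containing the Shimura curve of Mumford type $C_{\mathrm{sp}}$ and with maximal monodromy, and then (Proposition \ref{proposition:avoidsurface}) chooses a single curve $C\subset S$ of maximal monodromy, defined over $\Qbar$, whose degree $\deg(C)$ is made arbitrarily large compared to $\deg(S)$, and whose non-trivial Hecke translates leave $S$. The contradiction then comes from the intersection-theoretic invariance of the ratio $\deg(C)/\deg(S)$ under the moduli map $\Phi$ and Hecke correspondences (Theorem \ref{theorem:degree}), together with the fact that $\Phi(S)$ cannot lie in the Torelli locus (by Theorem \ref{theorem:shimuracurve}, since $S$ contains a Shimura curve of Mumford type). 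This replaces your countability argument with a single explicit degree inequality \eqref{degree-strict}, which is decidable over $\Qbar$, and it replaces the failed direct Arakelov contradiction with an argument that transports the Torelli-obstruction from the special curve $C_{\mathrm{sp}}\subset S$ to the generic curve $C\subset S$ via the surface $S$ itself.
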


\subsection{Outline of proof}
We now briefly describe our methods. Let $\Agr$ be the moduli space of principally polarized abelian varieties of dimension $g$ with symplectic level $r\geq 3$ structure. The proof of Theorem \ref{theorem:special} uses an Arakelov inequality due to Lu and Zuo \cite[Theorem 1.4]{luzuo-2019}, which says in particular that given a smooth proper curve $C \subset \Agr$ with induced abelian scheme $\pi \colon A \to C$, one has $\deg(\pi_\ast \Omega_{A/C}) < (g/2) \cdot \deg(\Omega_C)$ whenever $g \geq 12$ and $C$ is generically contained in the Torelli locus. On the other hand, this becomes an equality if $C$ is a Shimura curve of Mumford type (cf.\ Definition \ref{definition:nonsplit}), from which Lu and Zuo deduce that such Shimura curves cannot be generically contained in the Torelli locus if $g \geq 12$. We apply their Arakelov inequality to show that, more generally, the $n$-th power $A_{\bar \eta}^n$ cannot be isogenous to a Jacobian for every $n$, for $\eta \in C$ the generic point. 

We prove the above for Shimura curves of Mumford type $C \subset \Agr$ which are not necessarily smooth. Such curves exist in $\Agr$ for each $r \geq 3$. For technical reasons, we proceed in the second part of the paper to assume that $r$ is prime. We pick a suitable surface $S\subset\Agr$ that contains $C$ as well as Hodge generic points. Theorem \ref{theorem:special} implies that the induced abelian scheme $B\rightarrow S$ has the property that the $n$-th power $B_{\bar \eta}^n$ is not isogenous to a Jacobian for every $n$, where now $\eta \in S$ denotes the generic point of $S$. Bounding $n$, we then use an intersection-theoretic argument to find a curve $C_{\textrm{gen}} \subset S$ over $\Qbar$ that satisfies the conclusion of Theorem \ref{theorem:generic}. 

\subsection{Acknowledgements}

We would like to thank Emiliano Ambrosi, Philip Engel, Stefan Schreieder and Jacob Tsimerman for stimulating discussions. 

O.d.G.F.~has received funding from the ERC Consolidator Grant FourSurf \textnumero 101087365. A.S.~was partially supported by the NSF grant DMS-2338942, the Institute for advanced studies (via the NSF grant DMS-2424441), and a Sloan research fellowship.

\section{Notation and conventions} \label{conventions}

We work in characteristic zero. 
For an integer $r \geq 3$, 
we denote by $\ca A_{g,[r]}$ the moduli scheme of $g$-dimensional principally polarized abelian varieties with symplectic level $r$ structure, and by $\ca X \to \Agr$ the universal family. 
For an abelian scheme $f \colon A \to S$ over a smooth variety $S$, the \emph{Hodge line bundle} is the line bundle $\det(f_\ast \Omega_{A/S})$, which we often simply denote by $L = \det(f_\ast \Omega_{A/S})$. If $S$ is a proper curve, then the degree of the vector bundle $f_\ast \Omega_{A/S}$ is the degree of its determinant $L$. A \emph{Shimura curve} $C \subset \Agr$ will be an irreducible component of a one-dimensional Shimura subvariety of $\Agr$.
A subvariety $Z \subset \Agr$ is said to have \emph{maximal monodromy} 
if the map $\pi_1^{\textnormal{\'et}}(Z) \to \pi_1^{\textnormal{\'et}}(\Agr)$ is surjective. 

\section{Shimura curves of Mumford type}

In this section, we explain the definition of Shimura curves of Mumford type, which are certain special curves $C \subset \ca A_{g,[r]}$. 
We follow the construction of \cite[Section 5]{viehwegzuo-2004}, see also \cite[p.\ 52]{luzuo-2019}.  

Let $d > 1$ be an integer. 
Let $F$ be a totally real field of degree $d$ over $\QQ$. 
Let $A$ be a quaternion algebra over $F$, which is which at one real place of $F$ and ramified at its other real places.
Let $D = \rm{Cor}_{F/\QQ}(A)$ be the corestriction of $A$, which is a central simple algebra over $\QQ$. 
That is, put $D_1 = \otimes_\sigma \left(A \otimes_{F, \sigma}\Qbar \right)$, where $\sigma$ ranges over $\Hom(F, \Qbar)$, and then $D = (D_1)^{\Gal(\Qbar/\QQ)}$. 

\begin{lemma}\label{lemma:cases}
Let $F$ and $A$ be as above. 
Then either one of the following conditions holds:
\begin{enumerate}
\item \label{case:i} We have $D \simeq \rm{M}_{2^d}(\QQ)$ and $d$ is odd. 
    \item \label{case:ii} We have $D \simeq \rm{M}_{2^{d-1}}(B)$ for some non-split quaternion algebra $B$ over $\QQ$, which splits over a quadratic extension $L = \QQ(\sqrt b)$ of $\QQ$ which is imaginary if and only if $d$ is even. 
\end{enumerate}
\end{lemma}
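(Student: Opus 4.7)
The plan is to analyze $D = \Cor_{F/\QQ}(A)$ through its local Brauer invariants, using the standard compatibility
$$\inv_v([D]) \;=\; \sum_{w \mid v} \inv_w([A])$$
for every place $v$ of $\QQ$.

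First, I would pin down the coarse structure of $D$. It has degree $2^d$ over $\QQ$, as one sees from the tensor construction $D \otimes_\QQ \Qbar \cong (A \otimes_F \Qbar)^{\otimes d} \cong \rm{M}_{2^d}(\Qbar)$. Moreover, $[D]$ has order dividing $2$ in $\Br(\QQ)$, because $\Cor_{F/\QQ}$ is a group homomorphism and $[A]$ has order dividing $2$ in $\Br(F)$ as $A$ is a quaternion algebra. Hence the underlying division algebra of $D$ is either $\QQ$ itself or a non-split quaternion algebra $B$ over $\QQ$, and correspondingly $D \cong \rm{M}_{2^d}(\QQ)$ or $D \cong \rm{M}_{2^{d-1}}(B)$.

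Next, I would compute the invariant at the archimedean place. Because $A$ is split at exactly one of the $d$ real places of $F$ and ramified at the other $d-1$, one gets
$$\inv_\infty([D]) \;=\; \sum_{w \mid \infty} \inv_w([A]) \;=\; \frac{d-1}{2} \pmod{\ZZ},$$
which vanishes precisely when $d$ is odd. In particular, when $d$ is even then $[D]\ne 0$ and we are forced into case (ii) with the resulting quaternion algebra $B$ itself ramified at $\infty$; and case (i) can occur only for odd $d$.

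For case (ii) I would then produce the quadratic splitting field $L$. A quadratic extension $L/\QQ$ splits a quaternion algebra $B$ if and only if no place of $\QQ$ ramified in $B$ splits in $L$ (a direct comparison of local invariants of $B \otimes_\QQ L$, using that a split place contributes twice with $[L_w:\QQ_v]=1$ whereas an inert or ramified place contributes once with $[L_w:\QQ_v]=2$). When $d$ is even, the invariant computation forces $B$ to be ramified at $\infty$, so any splitting $L$ is necessarily imaginary; conversely, the Chinese remainder theorem (or weak approximation) produces a squarefree $b < 0$ whose class at each finite prime ramified in $B$ is a non-square, giving an imaginary quadratic $L = \QQ(\sqrt b)$ that splits $B$. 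When $d$ is odd and $[D] \ne 0$, $B$ is split at $\infty$, and the same approximation argument applied with $b > 0$ yields a real quadratic $L$ splitting $B$. In both cases $L$ is imaginary iff $d$ is even, as required.

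The entire argument is bookkeeping with local invariants of corestrictions; the only step that calls for any actual construction is the weak approximation step producing $L$ of the prescribed signature at $\infty$ together with prescribed local behaviour at each finite ramified prime of $B$. That is a standard exercise in the arithmetic of quaternion algebras, so I do not anticipate any real obstacle.
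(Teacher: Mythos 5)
The paper does not actually supply a proof of this lemma: it simply cites \cite[Lemma 5.7]{viehwegzuo-2004} and \cite[Proposition 4.2(3)]{moonen-classification}. Your argument via local Brauer invariants of the corestriction is correct and is, in essence, the standard argument that those references give, so there is no substantive divergence to report.

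A couple of small comments. The step identifying the underlying division algebra of $D$ as $\QQ$ or a quaternion algebra $B$ tacitly uses that period equals index over number fields (Brauer--Hasse--Noether); you may want to say so explicitly. And on the signature claim: the ``imaginary iff $d$ even'' assertion in the lemma is slightly loosely worded, since for $d$ odd (and $B$ non-split) the quaternion algebra $B$ is split at $\infty$, so it has both real and imaginary splitting quadratic fields. You handle this correctly by proving the sharp half (any splitting field is forced imaginary when $d$ is even, since $B$ is ramified at $\infty$) and the existence half (a real splitting field exists when $d$ is odd, via weak approximation), which is exactly what the statement is intended to convey.
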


\begin{proof}
This is shown in 
 \cite[Lemma 5.7]{viehwegzuo-2004}. Alternatively, see \cite[Proposition 4.2, item (3)]{moonen-classification}. 
 \end{proof}
\begin{remark} \label{remark}
Let $d > 1$ be an integer, let $F$ be a totally real number field of degree $d$, and let $\Sigma$ be the set of places of $F$. Let $A$ be a quaternion algebra which is split at one real place of $F$ and ramified at its other real places. Such an algebra exists because for every finite set $R \subset \Sigma$ of non-complex places such that $\va{R}$ is even, there exists a quaternion algebra $A$ over $F$ which ramifies exactly at the places $v \in R$ (see \cite[Theorem 14.6.1]{voight}). If $d$ is even then
 $D = \rm{Cor}_{F/\QQ}(A) \simeq \rm{M}_{2^{d-1}}(B)$ for some non-split quaternion algebra, see Lemma \ref{lemma:cases}. 
If $d$ is odd, then one can choose $A$ such that $D \simeq \rm{M}_{2^d}(\QQ)$ (resp.\ $D \simeq \rm{M}_{2^{d-1}}(B)$ for some non-split $B$). Indeed, this is true because corestriction of Brauer groups along a finite morphism of schemes commutes with base change (see \cite[Proposition 3.8.1]{brauer}), and the corestriction map of Brauer groups attached to a finite extension of local fields commutes with the local invariant (see \cite[p.\ 237, Theorem 10]{Lorenz2008}). 
\end{remark}

Let again $d > 1$, $F$ a totally real field of degree $d$ over $\QQ$, $A$ a quaternion algebra over $F$ which is split at one real place of $F$ and ramified at its other real places, and $D = \rm{Cor}_{F/\QQ}(A)$. 
By Lemma \ref{lemma:cases}, there is an embedding
\begin{align}\label{embedding}
D \hookrightarrow \rm{M}_{2^{d+1}}(\QQ), \quad \quad \epsilon \in \set{0,1}. 
\end{align}
Let $\OO \subset A$ be an order in $A$, and let $\OO^1 \subset \OO^\ast$ be the subgroup of units that have reduced norm one. Let $\sigma \colon F \to \RR$ be such that $A \otimes_{F,\sigma} \RR \simeq \rm{M}_2(\RR)$; then the image of $\OO^1 \subset A^\ast$ in $\GL_2(\RR) \subset \rm{M}_2(\RR)$ under this isomorphism is contained in $\SL_2(\RR)$, which gives a natural embedding $\OO^1 \hookrightarrow \SL_2(\RR)$. Let $\Gamma \subset \OO^1$ be a finite index torsion-free arithmetic subgroup. Then $C_1 \coloneqq \Gamma \sm \bb H$ is a smooth proper curve, see \cite[Chapter 9]{shimura}. The diagonal embedding $\OO \to \otimes_{\sigma \in \Hom(F, \overline{\QQ})} \left(A \otimes_{F, \sigma}\Qbar \right)$ induces an embedding $\OO^1 \to D^\ast$, the embedding \eqref{embedding} induces an embedding $D^\ast \hookrightarrow \GL_{2^{d+\epsilon}}(\QQ)$, 
and the composition 
\[
    \pi_1^\et(C_1) = \Gamma \hookrightarrow \OO^1 \hookrightarrow D^\ast \hookrightarrow \GL_{2^{d+\epsilon}}(\QQ), \quad \quad \epsilon \in \set{0,1},  
\]
corresponds to a local system  on the curve $C_1$, which, by \cite[Lemma 5.10]{viehwegzuo-2004}, 
underlies a family of $g$-dimensional polarized abelian varieties $$B_1 \to C_1, \qquad g = 2^{d + \epsilon -1}.$$ 

\begin{definition}\label{definition:nonsplit}
 Let $r \geq 3$ be an integer. A \emph{Shimura curve of Mumford type} in $\Agr$ (see \cite{viehwegzuo-2004}) is a proper Shimura curve $C  \subset \Agr$ of the following form. Consider a polarized abelian scheme $B_1 \to C_1$ as defined above. 
Pick an étale cover $C_2 \to C_1$ with pull-back $B_2 \to C_2$, so that there is a principally polarized abelian scheme $A_2 \to C_2$ isogenous to the polarized abelian scheme $B_2 \to C_2$, such that $A_2 \to C_2$ carries a symplectic level $r$ structure. This gives a morphism $C_2 \to \ca A_{g,[r]}$, and we let $C \subset \ca A_{g,[r]}$ denote its image. 

Let us say that the Shimura curve of Mumford type $C$ is \emph{split} (resp.\ \emph{non-split}) if we are in case \eqref{case:i} (resp.\ \eqref
{case:ii}) of Lemma \ref{lemma:cases}. 
\end{definition}
 
In dimension $g=4$, the construction of such curves is due to Mumford (cf.~\cite{mumford}), whence the name. 

\begin{remark} \label{remark:deligne}
Let $r \geq 3$ be an integer and let $C \subset \Agr$ be a Shimura curve of Mumford type. Then there exists an integer $s \geq 3$ with $r \mid s$ and a Shimura curve of Mumford type $C' \subset \ca A_{g,[s]}$ which is smooth, and such that $C'$ dominates $C$ with respect to the finite étale cover $\ca A_{g,[s]} \to \Agr$. Indeed, this follows from \cite[Proposition 1.15]{deligne-travauxshimura-1971} (compare \cite[Lemma B.6]{dGFSchreieder-2025}). 
\end{remark}

Let $C \subset \ca A_{g,[r]}$ be a Shimura curve of Mumford type. Consider the corresponding family of $g$-dimensional 
principally polarized abelian varieties 
\begin{align} \label{mumfordtype}
f \colon A \to C, \qquad \text{where} \qquad g = \begin{cases}
			2^{d-1} & \text{in case \eqref{case:i},}\\
            2^d & \text{in case \eqref{case:ii}.}
		 \end{cases}
\end{align}
Then, by \cite[Lemma 5.9]{viehwegzuo-2004}, we have for the generic fiber $A_{\eta}$ of $f$, that
{\small
\begin{align} \label{End}
\End(A_\eta)\otimes \QQ = 
\begin{cases}
			\;\;\QQ & \text{in case \eqref{case:i};}\\
            \begin{cases}\text{a totally definite (resp.~indefinite)} & \\
            \text{quaternion algebra over $\QQ$ if $d$} &
             \\
            \text{is even (resp.\ odd)}
& 
\end{cases} & \text{in case \eqref{case:ii}}.
		 \end{cases}
\end{align}
}
\begin{lemma} \label{simple}
In the above notation, the geometric generic fiber $A_{\bar \eta}$ is simple. More precisely, $\End(A_{\bar \eta}) \otimes \Q = \End(A_\eta) \otimes \Q$, and this is given by \eqref{End}.
\end{lemma}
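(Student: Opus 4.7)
The plan is to leverage the identification of $\End(A_\eta)\otimes\QQ$ in \eqref{End} (from \cite[Lemma 5.9]{viehwegzuo-2004}) and upgrade it to the geometric generic fiber via a monodromy computation, then deduce simplicity. The inclusion $\End(A_\eta)\otimes\QQ \subset \End(A_{\bar\eta})\otimes\QQ$ is automatic. For the reverse direction, I would fix a complex embedding $\Qtbar \hookrightarrow \CC$ and use that $\End(A_{\bar\eta})\otimes\QQ$ injects into the commutant, inside $\mathrm{M}_{2^{d+\epsilon}}(\QQ)$, of the image of the topological monodromy $\Gamma \hookrightarrow \OO^1 \hookrightarrow D^\ast \hookrightarrow \GL_{2^{d+\epsilon}}(\QQ)$. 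Since $\Gamma$ embeds as an arithmetic lattice in $\Res_{F/\QQ}\SL_1(A)(\RR) \cong \SL_2(\RR) \times K$ (with $K$ compact), a Borel-density type argument shows that the $\QQ$-Zariski closure of $\Gamma$ equals $\Res_{F/\QQ}\SL_1(A)$, so its commutant coincides with the centralizer of the $\QQ$-subalgebra $D \subset \mathrm{M}_{2^{d+\epsilon}}(\QQ)$.

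A Morita computation then identifies this centralizer. In case \eqref{case:i} with $\epsilon = 0$, $D = \mathrm{M}_{2^d}(\QQ)$ acts on $V = \QQ^{2^d}$ via the standard representation, with centralizer $\QQ$. In case \eqref{case:ii} with $\epsilon = 1$, $D = \mathrm{M}_{2^{d-1}}(B)$ acts on $V = B^{2^{d-1}}$, and Morita equivalence gives centralizer $B^{\mathrm{op}}$, a quaternion algebra over $\QQ$ with the same ramification as $B$---hence totally definite precisely when the splitting field $L$ is imaginary, i.e.\ when $d$ is even, matching \eqref{End}. Combined with the lower bound from \cite[Lemma 5.9]{viehwegzuo-2004}, this yields $\End(A_\eta)\otimes\QQ = \End(A_{\bar\eta})\otimes\QQ$.

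Finally, $A_{\bar\eta}$ is simple because its endomorphism algebra is a division algebra in all three cases: $\QQ$ in case \eqref{case:i}, and in case \eqref{case:ii} either a totally definite quaternion algebra, or an indefinite but non-split quaternion algebra over $\QQ$ (hence still a division algebra). Any isogeny decomposition $A_{\bar\eta} \sim A_0^k$ with $A_0$ simple would embed $\mathrm{M}_k(\End(A_0)\otimes\QQ)$ into $\End(A_{\bar\eta})\otimes\QQ$, forcing $k = 1$. The main obstacle is establishing that the $\QQ$-Zariski closure of $\Gamma$ equals $\Res_{F/\QQ}\SL_1(A)$ with enough precision to pin down the centralizer; once this density statement is in hand, the Morita bookkeeping and the matching with \eqref{End} are essentially formal.
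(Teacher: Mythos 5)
Your proposal is correct and in the same spirit as the paper's proof: both reduce to a Zariski-density statement about $\Gamma$ and then read off the commutant, so the core input (Borel density for the arithmetic group $\Gamma\subset\Res_{F/\QQ}\SL_1(A)$) is identical. The one substantive difference is in how the commutant is then identified. The paper's proof is a single terse reduction: it asserts that the Betti representation $V_\CC$ of any finite-index $\Gamma'\subset\Gamma$ is irreducible, which is enough in case \eqref{case:i} (where $V_\CC$ really is the irreducible standard module of $D_\CC=\rm{M}_{2^d}(\CC)$), but in the quaternionic case \eqref{case:ii} the statement is not literally true: there $D\otimes\CC\cong\rm M_{2^d}(\CC)$ while $\dim_\CC V_\CC = 2^{d+1}$, so $V_\CC$ is two copies of the irreducible $D_\CC$-module, not irreducible. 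What one actually wants, and what you carry out, is the $\QQ$-rational double-centralizer/Morita computation: the commutant of $D=\rm M_{2^{d-1}}(B)$ acting on $B^{2^{d-1}}$ is $B^{\mathrm{op}}\cong B$, a non-split quaternion algebra whose ramification at $\infty$ is read off from whether $L=\QQ(\sqrt b)$ is imaginary, matching \eqref{End}. So your more explicit route is the one that genuinely handles both cases. Two small points of precision that you should make explicit when writing this up: (a) the endomorphism algebra $\End(A_{\bar\eta})\otimes\QQ$ a priori lands only in $\bigcup_{\Gamma'}\End_{\Gamma'}(V)$ over all finite-index $\Gamma'\subset\Gamma$, not in $\End_\Gamma(V)$ itself, but since all $\Gamma'$ have the same $\QQ$-Zariski closure this union is already $\End_\Gamma(V)$; and (b) you should note that the algebraic group $\Res_{F/\QQ}\SL_1(A)$ generates $D$ as a $\QQ$-algebra (clear after base change to $\Qbar$), which is what licenses replacing the commutant of the group by the centralizer of $D$. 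Beyond these expository points the proof is complete.
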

\begin{proof}
Let $\Gamma'\subset \Gamma$ be a finite index subgroup. It suffices to prove that the representation of $\Gamma'$ induced by the Betti local system is irreducible. This follows from the general fact that if $G$ is a $\QQ$-algebraic group and $V$ a representation of $G$ such that $V_\mathbb{C}$ is irreducible, then $V_\mathbb{C}$ stays irreducible when restricted to any Zariski-dense subgroup of $G(\Q)$.  
\end{proof}

\section{Abelian schemes over special curves with no power isogenous to a Jacobian}

The goal of this section is to prove our first main theorem, i.e.\  Theorem \ref{theorem:special} from the introduction. The candidate for the abelian variety over $\Qtbar$ that will turn out not to admit any power isogenous to a Jacobian, will be given by the geometric generic fiber of the natural abelian scheme over a Shimura curve of Mumford type $C \subset \Agr$ defined in the previous section. 

\subsection{Generalities on Hodge line bundles of abelian schemes}

\begin{lemma} \label{lemma:hodgebundle}
    Let $S$ be a smooth variety over $\CC$.   
    \begin{enumerate}
        \item \label{flatbasechange:1} Let $\pi \colon S'\to S$ a morphism of smooth varieties. Let $f \colon A \to S$ be an abelian scheme with base change $f' \colon A' \to S'$. Define $L = \det(f_\ast \Omega_{A/S})$ and $L' = \det(f'_\ast \Omega_{A'/S'})$. Then $\pi^\ast(L) \simeq L'$. 
        \item \label{flatbasechange:2} Let $\phi \colon A_1 \to A_2$ be an isogeny of abelian schemes $f_i \colon A_i \to S$. Let $L_i = \det({f_i}_\ast \Omega_{A_i/S})$. There is a canonical isomorphism $L_1\simeq L_2.$ 
    \end{enumerate}
\end{lemma}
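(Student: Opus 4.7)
For part \eqref{flatbasechange:1}, the plan is to exploit the canonical identification $f_\ast \Omega_{A/S} \simeq \omega_{A/S}$, where $\omega_{A/S} = e^\ast \Omega_{A/S}$ and $e \colon S \to A$ is the zero section. This identification comes from the fact that $\Omega_{A/S} \simeq f^\ast \omega_{A/S}$ for abelian schemes (differentials are trivial along the fibers since the tangent bundle of an abelian variety is trivial), together with the projection formula. I would then note two compatibilities that each commute with arbitrary base change: (a) the formation of relative differentials, giving $(\pi')^\ast \Omega_{A/S} \simeq \Omega_{A'/S'}$ for the base change $\pi' \colon A' \to A$; and (b) the identity $\pi' \circ e' = e \circ \pi$, where $e'$ is the zero section of $A'/S'$. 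Pulling back along $e'$ and chasing the diagram gives $\omega_{A'/S'} \simeq \pi^\ast \omega_{A/S}$. Taking determinants yields $L' \simeq \pi^\ast L$.

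For part \eqref{flatbasechange:2}, the main idea is that in characteristic zero every isogeny $\phi \colon A_1 \to A_2$ is finite étale, because its kernel is a finite group scheme over a characteristic-zero base and hence étale. Therefore $\phi^\ast \Omega_{A_2/S} \xrightarrow{\sim} \Omega_{A_1/S}$ via the differential map. Pulling back along $e_1 \colon S \to A_1$ and using $\phi \circ e_1 = e_2$ produces a canonical isomorphism
\[
\omega_{A_2/S} = e_2^\ast \Omega_{A_2/S} = e_1^\ast \phi^\ast \Omega_{A_2/S} \xrightarrow{\sim} e_1^\ast \Omega_{A_1/S} = \omega_{A_1/S}.
\]
Passing to top exterior powers, and combining with the identification from part \eqref{flatbasechange:1} of the Hodge bundle with $\omega_{A_i/S}$, one obtains the canonical isomorphism $L_1 \simeq L_2$.

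There is no real obstacle here; the lemma is a bookkeeping statement about how the Hodge line bundle behaves under the two standard operations (base change and isogeny). The only mildly subtle point is remembering to justify that isogenies are étale in characteristic zero (so that $d\phi$ is a genuine isomorphism on Lie algebras, not merely an injection with torsion cokernel as happens in positive characteristic for, e.g., Frobenius isogenies). Once that is invoked, both parts reduce to standard base-change and functoriality statements for $\Omega^1$.
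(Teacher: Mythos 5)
Your proof is correct and follows the same approach the paper sketches: the paper's proof is a one-line remark citing the canonical isomorphism $f_\ast \Omega_{A/S} \simeq e^\ast \Omega_{A/S}$, and you have simply (and correctly) filled in the standard base-change and étale-isogeny details that this remark leaves implicit.
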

\begin{proof}
This is well-known, and can e.g.\ be deduced from the canonical isomorphism $f_\ast \Omega_{A/S} \simeq e^\ast \Omega_{A/S}$ where $e \colon S \to A$ denotes the origin section. 
\end{proof}

Let $f \colon A \to C$ be an abelian scheme of relative dimension $g$ over a smooth proper curve $C$, with Hodge line bundle $L  = \det(f_\ast \Omega_{A/C})$. We have $\deg(L) \leq (g/2) \cdot \deg(\Omega_C)$ by  \cite{faltings}.  Let $\pi \colon C' \to C$ be a finite morphism of smooth proper curves, of degree $d \geq 1$ and ramification divisor $R \subset C'$. Let $A' \to C'$ denote the base change of $A\to C$ along $\pi$, and define $L' = \det(f'_\ast \Omega_{A'/C'})$ as the Hodge line bundle of $A' \to C'$. 

\begin{lemma} \label{lemma:inequality}In the above situation, 
let $m \in \ZZ_{\geq 0}$ such that 
 $\deg(L)+m=(g/2) \cdot \deg( \Omega_C)$. Then $\deg(L') + d\cdot m = (g/2) \cdot  (\deg(\Omega_{C'}) - \deg(R))$. 
\end{lemma}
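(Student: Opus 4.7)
The plan is to observe that this lemma is essentially a bookkeeping exercise combining two standard ingredients: flat base change for the Hodge bundle (already recorded as Lemma~\ref{lemma:hodgebundle}) and Riemann--Hurwitz for the finite map $\pi \colon C' \to C$. There is no genuine conceptual content beyond tracking how the ``Arakelov defect'' $m$ transforms under pullback: it should simply be multiplied by the degree $d$, with the shift in the right-hand side accounted for by the ramification term in Riemann--Hurwitz.

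First I would apply Lemma~\ref{lemma:hodgebundle}\ref{flatbasechange:1} to the base change diagram for $A \to C$ along $\pi$, which yields a canonical isomorphism $L' \simeq \pi^\ast L$. Since $\pi$ is a finite morphism of degree $d$ between smooth proper curves, pullback of line bundles multiplies degrees by $d$, so $\deg(L') = d \cdot \deg(L)$.

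Next, I would substitute the hypothesis $\deg(L) = (g/2)\deg(\Omega_C) - m$ into this equality, obtaining
\[
\deg(L') \;=\; d\bigl((g/2)\deg(\Omega_C) - m\bigr) \;=\; (g/2)\cdot d\deg(\Omega_C) \;-\; dm.
\]
Riemann--Hurwitz for $\pi \colon C' \to C$ gives $\deg(\Omega_{C'}) = d\cdot\deg(\Omega_C) + \deg(R)$, i.e.\ $d\cdot\deg(\Omega_C) = \deg(\Omega_{C'}) - \deg(R)$. Inserting this into the displayed formula and moving $dm$ to the left yields
\[
\deg(L') + d\cdot m \;=\; (g/2)\bigl(\deg(\Omega_{C'}) - \deg(R)\bigr),
\]
which is the desired identity.

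There is no serious obstacle here, since each step is a one-line invocation of a cited result; the only thing to be careful about is that Lemma~\ref{lemma:hodgebundle} is applied in a setting where $C'$ and $C$ are smooth proper curves rather than arbitrary smooth varieties, but this is exactly the stated generality. In particular, nothing is being used about the curve being a Shimura curve of Mumford type, so the lemma is a purely formal preliminary to the subsequent application of the Lu--Zuo inequality.
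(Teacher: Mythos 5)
Your proof is correct and is essentially identical to the paper's: both use Lemma~\ref{lemma:hodgebundle}\eqref{flatbasechange:1} to get $\deg(L') = d\cdot\deg(L)$, combine it with Riemann--Hurwitz in the form $\deg(\Omega_{C'}) = d\cdot\deg(\Omega_C) + \deg(R)$, and then do the arithmetic. Nothing more to add.
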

\begin{proof}
By Lemma \ref{lemma:hodgebundle}, we have  $\deg(L') = d \cdot \deg(L)$. 
Moreover, we have $\deg(\Omega_C’) = d \cdot \deg(\Omega_C)+\deg(R)$. 
From these two equalities, we conclude that
$
\deg(L') + d\cdot m  = d \cdot (g/2)\cdot \Omega_C = (g/2)\cdot \left(\deg(\Omega_{C'}) - \deg(R) \right)$. 
\end{proof}

\begin{lemma} \label{lemma:higgs-power}
Let $Z$ be a smooth variety over $\CC$. Let $f \colon A \to Z$ be an abelian scheme, and $g \colon A^n \to Z$ its $n$-th self fiber product over $Z$. Let $h \colon Q \to Z$ be an abelian scheme and $\phi \colon A^n \to Q$ an isogeny of abelian schemes. Define the associated Hodge line bundles as $L_f = \det(f_\ast \Omega_{A/Z})$, $L_g = \det(g_\ast \Omega_{A^n/Z})$ and $L_h = \det(h_\ast \Omega_{Q/Z})$. Then the following assertions are true. 
\begin{enumerate}
\item We have $L_h \simeq L_g \simeq (L_f)^{\otimes n}$. 
\item Assume that $Z$ is a curve and 
 that $\deg(L_f) = (g/2) \cdot \deg(\Omega_Z)$. Then $\deg(L_h) = (ng/2) \cdot \deg(\Omega_Z)$. 
\end{enumerate}
\end{lemma}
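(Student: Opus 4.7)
The plan is to establish part (1) via the direct sum decomposition of the Hodge bundle of an $n$-fold self fiber product, and then derive part (2) as an immediate degree computation.

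For part (1), I would first show $L_g \simeq L_f^{\otimes n}$. Recall that for any abelian scheme $f \colon A \to Z$ with zero section $e \colon Z \to A$, there is a canonical isomorphism $f_\ast \Omega_{A/Z} \simeq e^\ast \Omega_{A/Z}$ (as used in Lemma \ref{lemma:hodgebundle}). The $n$-fold fiber product $g \colon A^n \to Z$ is also an abelian scheme, with zero section $e^n \colon Z \to A^n$ obtained as the fiber product of $n$ copies of $e$. Since relative cotangent bundles split across a fiber product, one has
\[
\Omega_{A^n/Z} \simeq \bigoplus_{i=1}^n p_i^\ast \Omega_{A/Z},
\]
where $p_i \colon A^n \to A$ is the $i$-th projection. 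Pulling back along $e^n$ and using $p_i \circ e^n = e$ yields
\[
g_\ast \Omega_{A^n/Z} \simeq (e^n)^\ast \Omega_{A^n/Z} \simeq \bigoplus_{i=1}^n e^\ast \Omega_{A/Z} \simeq (f_\ast \Omega_{A/Z})^{\oplus n}.
\]
Taking determinants and using the identity $\det(V^{\oplus n}) \simeq (\det V)^{\otimes n}$ for any locally free sheaf $V$, one concludes $L_g \simeq L_f^{\otimes n}$. For the remaining isomorphism $L_h \simeq L_g$, apply Lemma \ref{lemma:hodgebundle}(\ref{flatbasechange:2}) to the isogeny $\phi \colon A^n \to Q$.

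Part (2) then follows directly from part (1) together with the hypothesis: since $\deg(L_h) = \deg(L_g) = n \cdot \deg(L_f)$ and $\deg(L_f) = (g/2) \cdot \deg(\Omega_Z)$ by assumption, we obtain $\deg(L_h) = (ng/2) \cdot \deg(\Omega_Z)$.

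I do not anticipate any real obstacle here; the only subtlety is verifying the decomposition $g_\ast \Omega_{A^n/Z} \simeq (f_\ast \Omega_{A/Z})^{\oplus n}$, which is handled cleanly by identifying both sheaves with their pullback along the zero section, thereby avoiding any direct manipulation of pushforwards through the projections $p_i$.
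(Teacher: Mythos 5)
Your proof is correct and takes essentially the same route as the paper: both establish $L_g \simeq L_f^{\otimes n}$ from the decomposition $g_\ast \Omega_{A^n/Z} \simeq (f_\ast \Omega_{A/Z})^{\oplus n}$, invoke Lemma~\ref{lemma:hodgebundle} for the isogeny part, and obtain (2) by taking degrees. The only difference is that you spell out the derivation of the direct-sum decomposition (via the zero-section identification and the splitting of $\Omega_{A^n/Z}$), which the paper simply cites as canonical.
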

\begin{proof}
The first item follows from Lemma \ref{lemma:hodgebundle} and the canonical isomorphism $g_\ast \Omega_{A^n/Z} \simeq (f_\ast \Omega_{A/Z})^{\oplus n}$.  
If $Z$ is a curve and $\deg(L_f) = (g/2)\cdot \deg(\Omega_Z)$, then it follows that $\deg(L_h) = \deg(L_g) = n \cdot \deg(L_f)= (ng)/2 \cdot \deg(\Omega_Z)$. 
\end{proof}

\subsection{Arakelov bound for Shimura curves}
We recall the following result:

\begin{theorem}[M\"oller]
Let $C \subset \Agr$ be a smooth irreducible component of a one-dimensional Shimura subvariety of $\Agr$. Then the
corresponding abelian scheme $f \colon A \to C$ reaches the Arakelov bound. In particular, if $C$ is proper and the local system $R^1f_\ast \QQ$ has no non-trivial unitary part, then 
\begin{align} \label{higgs-equality}
\deg(L) = (g/2)\cdot \deg(\Omega_C), \quad \quad L = \det(f_\ast \Omega_{A/C}). 
\end{align}
\end{theorem}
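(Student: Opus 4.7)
The plan is to analyze the Higgs bundle $(E^{1,0} \oplus E^{0,1}, \theta)$ attached to the weight-one variation of Hodge structure $\V = R^1 f_\ast \QQ$ on $C$, where $E^{1,0} = f_\ast \Omega_{A/C}$, $E^{0,1} = R^1 f_\ast \OO_A$, and $\theta \colon E^{1,0} \to E^{0,1} \otimes \Omega_{\overline{C}}(\log S)$ is the Kodaira--Spencer map on a smooth compactification $\overline{C}$, with $S \subset \overline{C}$ the cuspidal divisor (so $S = \emptyset$ in the proper case). The principal polarization identifies $E^{0,1} \simeq (E^{1,0})^\vee$, hence $\deg E^{0,1} = -\deg E^{1,0}$. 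The classical Faltings--Arakelov bound to be matched is $\deg E^{1,0} \leq \tfrac{1}{2}\rk(\theta)\cdot \deg\Omega_{\overline{C}}(\log S)$.

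The key step is to decompose $\V$ using the Shimura-theoretic origin of $C$. Since $C$ is an irreducible component of a one-dimensional Shimura subvariety of $\Agr$, it is cut out by a Shimura datum $(G,X)$ with $\dim X^+ = 1$ together with a symplectic representation $\rho\colon G \to \Sp_{2g,\QQ}$. Decomposing $\rho$ according to the almost $\QQ$-simple factors of $G^{\ad}$ and appealing to the classification of Hodge cocharacters on such factors (cf.~\cite{deligne-travauxshimura-1971, viehwegzuo-2004}), each isotypic summand of $\V$ is of one of two types: a \emph{uniformizing} summand, which on $C = \Gamma \sm \HH$ is isomorphic to the tautological weight-one VHS coming from the relevant $\SL_2$-factor tensored with a unitary local system, or a \emph{unitary} summand on which the Higgs field vanishes. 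Correspondingly, one obtains an orthogonal splitting $E^{1,0} = U \oplus W$ and $E^{0,1} = U^\vee \oplus W^\vee$, such that $\theta$ kills $U$ and restricts to an isomorphism $W \xrightarrow{\sim} W^\vee \otimes \Omega_{\overline{C}}(\log S)$.

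From this, the computation is immediate. The summand $U$ underlies a flat bundle, so $\deg U = 0$. The isomorphism on $W$ yields $\deg W = -\deg W + \rk(W)\cdot \deg\Omega_{\overline{C}}(\log S)$, whence $\deg E^{1,0} = \deg W = \tfrac{1}{2}\rk(\theta) \deg\Omega_{\overline{C}}(\log S)$, which is precisely the Arakelov equality. For the specialization~\eqref{higgs-equality}, properness gives $S = \emptyset$ and $\Omega_{\overline{C}}(\log S) = \Omega_C$, while the absence of a unitary part of $\V$ forces $U = 0$ and $\rk(\theta) = g$; substituting produces $\deg L = (g/2)\deg \Omega_C$. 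The main obstacle is to justify the orthogonal splitting $\V = \V_{\mathrm{unit}} \oplus \V_{\mathrm{nu}}$ and the non-degeneracy of $\theta$ on $\V_{\mathrm{nu}}$: this reduces to the representation-theoretic fact that the adjoint Hodge cocharacter of $(G,X)$ has non-trivial projection to at most one $\QQ$-simple factor of $G^{\ad}_\RR$ (the one uniformizing the curve) and is trivial on the others, a direct consequence of the one-dimensionality of the Shimura datum.
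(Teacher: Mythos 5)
The paper proves this result by simply citing M\"oller's Theorem~1.2, whereas you reconstruct the Higgs-theoretic argument underlying that theorem. Your outline is correct and is essentially the Viehweg--Zuo/M\"oller argument: split the weight-one VHS on the Shimura curve into a unitary summand, which has vanishing Higgs field, and a complementary summand on which the Higgs field is an isomorphism (the maximal-Higgs condition forced by the one-dimensionality of the Shimura datum); the degree computation $\deg W = -\deg W + \rk(W)\deg\Omega_{\overline C}(\log S)$ then yields the Arakelov equality. Two small imprecisions, neither of which affects the displayed special case: (i) the phrase ``$\QQ$-simple factor of $G^{\ad}_\RR$'' is a type mismatch and should read ``almost-simple (i.e.\ $\RR$-simple) factor of $G^{\ad}_\RR$''; (ii) the claim $\deg U = 0$ needs care when $S \neq \emptyset$, since the degree of the Deligne canonical extension of a unitary flat bundle on a punctured curve is generally nonzero unless the local monodromies are unipotent. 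In the proper case with no unitary part, which is what the theorem's conclusion uses, both points are moot. The citation is more economical for the paper's purposes; your sketch has the merit of making explicit the structure of the Hodge bundle on which Corollary~\ref{corollary:equality} silently depends.
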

\begin{proof}
See \cite[Theorem 1.2]{moller-shimura-teichmuller}.  
\end{proof}

\begin{corollary} \label{corollary:equality}
Any smooth Shimura curve of Mumford type $C \subset \Agr$ satisfies the equality \eqref{higgs-equality}. 
\end{corollary}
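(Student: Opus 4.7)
The plan is to invoke M\"oller's theorem directly. By Definition \ref{definition:nonsplit}, a smooth Shimura curve of Mumford type $C \subset \Agr$ is a proper smooth irreducible component of a one-dimensional Shimura subvariety of $\Agr$, so the structural hypotheses of the theorem are in place. The only remaining task is to verify that the local system $R^1 f_\ast \QQ$ on $C$ associated to the natural abelian scheme $f \colon A \to C$ has no non-trivial unitary sub-local system; the equality \eqref{higgs-equality} will then follow from the theorem.

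First I would show that $R^1 f_\ast \QQ$ is an irreducible $\QQ$-local system on $C$. By Lemma \ref{simple}, the geometric generic fiber $A_{\bar \eta}$ is simple, with endomorphism algebra $\End(A_{\bar \eta}) \otimes \QQ$ a division algebra over $\QQ$. Any non-trivial $\QQ$-sub-local system of $R^1 f_\ast \QQ$ would, under the Lefschetz correspondence between sub-Hodge structures and isogeny classes of abelian subvarieties, produce a non-trivial abelian subvariety of $A_{\bar \eta}$ up to isogeny, contradicting simplicity.

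Next I would observe that the unitary part of $R^1 f_\ast \QQ$---the maximal sub-local system on which the Higgs field of the associated variation of Hodge structure vanishes---is intrinsically defined and hence Galois-stable, so it descends to a $\QQ$-sub-local system. By the irreducibility just established, this unitary part is either zero or all of $R^1 f_\ast \QQ$. The latter case is excluded because a Mumford-type Shimura curve has a non-constant period map to $\Agr$ (its image is positive-dimensional in $\Agr$), so $f$ is not isotrivial. Therefore the unitary part vanishes, and M\"oller's theorem delivers the desired equality.

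The main obstacle I anticipate is making rigorous the assertion that the unitary sub-local system descends to $\QQ$. This is standard for polarizable $\QQ$-variations of Hodge structure but warrants a precise citation; alternatively, one could argue directly that $R^1 f_\ast \CC$ itself is already irreducible, which follows from the irreducibility of the standard representation of $D \otimes \CC$ (a tensor product of the standard representations of the $2 \times 2$ matrix factors), and this would rule out any unitary sub-local system even over $\CC$.
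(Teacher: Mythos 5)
Your proof is correct and follows essentially the same route as the paper: invoke M\"oller's theorem (checking smoothness, properness, Shimura-curve hypotheses) and then verify that $R^1 f_\ast \QQ$ has no non-trivial unitary part by combining irreducibility with non-unitarity. The paper states this in one line---``the monodromy is irreducible and non-compact''---while you unpack it by deriving irreducibility from the simplicity of $A_{\bar\eta}$ in Lemma~\ref{simple} and ruling out full unitarity via non-isotriviality of the period map; since non-compact monodromy is equivalent to the local system not being unitary, which for a weight-one VHS is equivalent to a non-vanishing Higgs field and hence a non-constant period map, the two phrasings are interchangeable.
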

\begin{proof}
    Such a curve is smooth and proper, and the natural abelian scheme $f \colon A \to C$ has no non-trivial unitary part in $R^1f_\ast \QQ$, because the monodromy is irreducible and non-compact. 
\end{proof}

\subsection{Proof of the theorem for special curves}

We have the following result, of which Theorem \ref{theorem:special} is a direct consequence (as we show below).

\begin{theorem}\label{theorem:shimuracurve}
Let $g=2^d$ with $d \geq 4$. Let $r \geq 3$ and let $C \subset \Agr$ be a proper curve which is a Shimura curve of Mumford type, see Definition \ref{definition:nonsplit}. Let $A \to C$ be the induced abelian scheme. Then no power of the geometric generic fiber $A_{\bar \eta}$ is isogenous to the Jacobian of a stable curve. 
\end{theorem}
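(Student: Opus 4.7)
I plan to argue by contradiction, using the Lu--Zuo Arakelov inequality to clash with Corollary \ref{corollary:equality}. Suppose, for some $n \geq 1$, there is an isogeny $\phi \colon A_{\bar{\eta}}^n \to \Jac(X)$ of some degree $d$, where $X$ is a stable curve of compact type over $\overline{k(C)}$, with kernel $K = \ker \phi$ a finite étale subgroup of $A_{\bar{\eta}}^n[d]$.

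The heart of the argument is to descend this data to a smooth proper Shimura curve that is \emph{étale} (not merely generically étale) over $C$. By Remark \ref{remark:deligne}, I may choose $r' \geq 3$ with $r \mid r'$ and $d \mid r'$, together with a smooth proper Shimura curve of Mumford type $\tilde{C} \subset \mathcal{A}_{g, [r']}$, such that the map $\tilde{C} \to C$ induced by the étale projection $\mathcal{A}_{g, [r']} \to \Agr$ is finite étale. Let $\tilde{A} \to \tilde{C}$ denote the pulled-back abelian scheme. The level $r'$ structure trivializes $\tilde{A}[r']$ as a finite étale group scheme over $\tilde{C}$, and hence trivializes $\tilde{A}^n[d] \simeq (\mathbb{Z}/d)^{2ng}$. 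Under this trivialization, $K$ corresponds to a constant subgroup, and so extends to a finite étale subgroup scheme $\mathcal{K} \subset \tilde{A}^n$ over $\tilde{C}$. The quotient $Q \coloneqq \tilde{A}^n/\mathcal{K}$ is then an abelian scheme over $\tilde{C}$ whose geometric generic fiber is isomorphic to $\Jac(X)$. After a further finite étale cover of $\tilde{C}$ to trivialize the N\'eron--Severi monodromy of $Q$ and to add symplectic level structure, the theta polarization on $Q_{\bar{\eta}}$ extends to a principal polarization on $Q$, producing a map to some $\mathcal{A}_{ng, [s]}$ whose geometric generic image lies in the Torelli locus.

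With this setup, the two Arakelov conclusions collide. Since $ng \geq g = 2^d \geq 16 > 12$, the Lu--Zuo inequality gives $\deg L_Q < (ng/2) \cdot \deg \Omega_{\tilde{C}}$. On the other hand, isogeny invariance of the Hodge line bundle (Lemma \ref{lemma:hodgebundle}) gives $L_Q \simeq L_{\tilde{A}^n}$, and $L_{\tilde{A}^n} \simeq L_{\tilde{A}}^{\otimes n}$ by Lemma \ref{lemma:higgs-power}. Since $\tilde{C}$ is a smooth proper Shimura curve of Mumford type, Corollary \ref{corollary:equality} gives $\deg L_{\tilde{A}} = (g/2)\deg \Omega_{\tilde{C}}$, hence $\deg L_Q = (ng/2)\deg \Omega_{\tilde{C}}$, contradicting the strict inequality.

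The main obstacle is to avoid ramification in the cover $\tilde{C} \to C$. Had $\tilde{C}$ been merely the normalization of $C$ in the field of definition of $\phi$, a non-trivial ramification divisor $R$ would contribute $(ng/2)\deg R$ to $(ng/2)\deg \Omega_{\tilde{C}}$ but nothing to $\deg L_Q$, so Lu--Zuo would only yield $0 < (ng/2)\deg R$, which is perfectly consistent and produces no contradiction. The level structure trick---working inside $\mathcal{A}_{g, [r']}$ at a level $r'$ divisible by the isogeny degree $d$, combined with the existence of smooth Shimura curves of Mumford type at arbitrarily divisible levels (Remark \ref{remark:deligne})---is precisely what allows the isogeny kernel to descend étale to $\tilde{C}$ and recover the required equality.
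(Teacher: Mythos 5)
Your proposal reproduces the paper's strategy in its essentials: spread the isogeny out over a smooth Shimura curve of Mumford type at a suitably divisible level (Remark \ref{remark:deligne}), extend the pulled-back polarization to a principal polarization on the quotient abelian scheme, land in the Torelli locus inside some $\mathcal{A}_{ng,[s]}$, and pit the Lu--Zuo strict inequality against the equality forced by Corollary \ref{corollary:equality} and Lemma \ref{lemma:higgs-power}. However, there is a genuine gap at the point where you invoke Lu--Zuo: the theorem requires the curve to be \emph{generically} in the \emph{open} Torelli locus (equivalently, its generic fiber is the Jacobian of a smooth connected curve), but you allow $X$ to be a stable curve of compact type with several irreducible components. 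If $X$ is reducible, then $JX = \prod_i JX_i$ with $k \geq 2$ factors, and the resulting curve in $\mathcal{A}_{ng,[s]}$ lies entirely in the decomposable boundary of the closed Torelli locus, where the Lu--Zuo strict inequality is not available. The paper handles this by first using simplicity of $A_{\bar\eta}$ (Lemma \ref{simple}) together with Poincar\'e complete reducibility to replace $(n, X)$ by $(n_1, X_1)$ with $X_1$ an irreducible component and $A_{\bar\eta}^{n_1} \sim JX_1$; your proposal omits this reduction, and without it the argument does not close.

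A secondary, more technical point: you apply Lu--Zuo directly to the family $Q \to \tilde C$, but the theorem is stated for a curve sitting inside $\mathcal{A}_{ng,[s]}$. The map $\tilde C \to \mathcal{A}_{ng,[s]}$ need not be an embedding, so one should pass to the image curve $C'$, apply Lu--Zuo there, and transfer the strict inequality back to $\tilde C$ along the finite morphism $\tilde C \to C'$ using Lemma \ref{lemma:inequality} (and, as the paper does, ensure $C'$ is smooth via another application of \cite[Lemma B.6]{dGFSchreieder-2025}). This is easily repaired, but as written the step is not justified.
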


\begin{proof}
Assume for a contradiction that for some $n \geq 1$, there exists a stable compact type curve $X  = \cup_i X_i$ over $\smash{\overline{\mathbb{Q}(t)}}$, with irreducible components $X_1, \dotsc, X_k$ (which are smooth projective connected curves over $\smash{\overline{\mathbb{Q}(t)}}$), such that $A_{\bar \eta}^n$ is isogenous to the Jacobian $JX = JX_1 \times \cdots \times JX_k$. By Lemma \ref{simple}, the abelian variety $A_{\bar \eta}$ is simple. Hence, by Poincaré irreducibility, we have that $A_{\bar \eta}^{n'}$ is isogenous to $JX_1$ for some $n' \leq n$. In other words, to arrive at a contradiction, we may assume that $k=1$, and that there exists an isogeny \begin{align} \label{isogeny}A_{\bar \eta}^n \to JX \end{align} between $A_{\bar \eta}^n$ and the Jacobian $JX$ of a smooth projective connected curve $X$ over $\smash{\overline{\mathbb{Q}(t)}}$. 
In view of Remark \ref{remark:deligne}, 
up to replacing $\ca A_{g,[r]}$ by some finite étale cover $\ca A_{g,[r']} \to \ca A_{g,[r]}$ with $r \mid r'$ and $C$ by an irreducible component of its preimage, we may assume that $C$ is smooth, and that the kernel of \eqref{isogeny} is the geometric generic fiber $M_{\bar \eta} \subset A_{\bar \eta}^n[r]$ of a subgroup scheme $M \subset A^n[r]$ which is finite étale over $C$. 
Taking the quotient by $M$ yields an abelian scheme $A^n/M \to C$ such that $(A^n/M)_{\bar \eta} = JX$. 

Consider the polarization on $A^n_{\bar \eta}$ pulled back from the canonical polarization on $JX$ under \eqref{isogeny}. As $A_{\bar \eta}$ is principally polarized, we can view this polarization as an $n \times n$-matrix with values in $\End(A_{\bar \eta})$. As $\End(A_{\bar \eta}) = \End(A_\eta)$ by Lemma \ref{simple}, this yields a polarization on $A_\eta^n$ which extends to a polarization on the abelian scheme $A^n \to C$, which in turn descends to a principal polarization on $A^n/M \to C$ as it does so on the geometric generic fiber. We then let $\tilde  C \to C$ be a finite étale cover so that if $\tilde  M \subset \tilde  A \to \tilde  C$ denotes the pull-back of $M \subset A \to C$, then the principally polarized abelian scheme $\tilde  Q \coloneqq \tilde  A^n/\tilde  M \to \tilde  C$ admits a sympletic level $s$ structure, for some $s \geq 3$. This gives a finite morphism $\tilde  C \to \ca A_{ng,[s]}$, and we let $C' \subset \ca A_{ng,[s]}$ denote its image. Define $Q' \to C'$ as the induced abelian scheme over $C'$.

Note that $C' \subset \ca A_{ng,[s]}$ is a special subvariety. Hence, by \cite[Lemma B.6]{dGFSchreieder-2025}, there exists a finite étale cover $\ca A_{ng,[s']} \to \ca A_{ng,[s]}$ with $s \mid s'$ and a smooth special subvariety $C'' \subset \ca A_{ng,[s']}$ which is an irreducible component of the preimage of $C$ in $\ca A_{ng,[s']}$. Up to replacing $\tilde  C$ by a finite étale cover, we may thus assume that $C'$ is smooth. 

The situation may be depicted in the following diagram, in which the three curves $C', \tilde C$ and $C$ are smooth, 
the morphism $\tilde  C \to C'$ is finite, the morphism $\tilde  C \to C$ is finite étale, 
and the two squares are cartesian:
\[
\xymatrix{
&&\tilde  Q\ar[dl] \ar[dr]&&\tilde  A^n \ar[dr]\ar[ll] \ar[dl]& &\\
&Q'\ar[dr]&& \tilde  C\ar[dl] \ar[dr] && A^n \ar[dl] &
\\
 \ca A_{ng,[s]}&& C' \ar@{_{(}->}[ll] & & C \ar@{^{(}->}[rr]& &\ca A_{g,[r]}.
}
\]
Consider the abelian schemes $f \colon A \to C, g \colon A^n \to C$, $\tilde h \colon \tilde  Q \to \tilde  C$, and $h' \colon Q' \to C'$, and define $L_f, L_g, L_{\tilde h}$ and $L_{h'}$ as the respective Hodge line bundles of these families. 
As $C \subset \Agr$ is a smooth Shimura curve of Mumford type, we have by Corollary \ref{corollary:equality} that $\deg(L_f) = (g/2) \cdot \deg(\Omega_C)$. 
By Lemmas \ref{lemma:inequality} and \ref{lemma:higgs-power}, we therefore obtain:
\begin{align} \label{first}
\deg(L_{\tilde h}) = \frac{ng}{2} \cdot \deg(\Omega_{\tilde  C}). 
\end{align}
The geometric generic fibers of $\tilde  Q \to \tilde  C$ and $Q' \to C'$ are canonically isomorphic as principally polarized abelian varieties, and by construction isomorphic to the Jacobian $JX$ of the smooth projective connected curve $X$ over $\smash{\overline{\mathbb{Q}(t)}}$. Therefore, the smooth curve $C' \subset \ca A_{ng,[s]}$ is contained in the closed Torelli locus, and intersects the open Torelli locus non-trivially. As $g = 2^d>11$, this implies by \cite[Theorem 1.4]{luzuo-2019} that
\begin{align}\label{align:inequality-proof}
\deg(L_{h'}) < \frac{ng}{2} \cdot \deg(\Omega_{C'}). 
\end{align}
The morphism 
$
\tilde  C \to C'
$
is a finite morphism of smooth curves. Therefore, by Lemma \ref{lemma:inequality}, the strict inequality \eqref{align:inequality-proof} implies that we have a strict inequality
$
\deg(L_{\tilde h}) < (ng/2) \cdot \deg(\Omega_{\tilde  C})$, which contradicts \eqref{first}.
\end{proof}

\begin{proof}[Proof of Theorem \ref{theorem:special}]
We must provide an abelian variety of dimension $g = 2^d \geq 16$ over $\smash{\overline{\mathbb{Q}(t)}}$ with no power isogenous to a Jacobian. Let $r \geq 3$ be an integer and $C \subset \Agr$ a non-split Shimura curve of Mumford type (cf.\ Definition \ref{definition:nonsplit}); such a pair $(r,C)$ exists by Remark \ref{remark}. Since $C \subset \Agr$ is a Shimura curve, it is defined over $\Qbar$. Let $A \to C$ be the natural abelian scheme. The geometric generic fiber $A_{\bar \eta}$ is a $g$-dimensional abelian variety over $\smash{\overline{\mathbb{Q}(t)}}$ which, by Theorem \ref{theorem:shimuracurve}, has no power isogenous to a Jacobian. 
\end{proof}

\section{Correspondences for moduli of polarized abelian varieties} \label{sec:correspondences}
In this section, we generalize the classical notion of Hecke correspondence $\Agr \leftarrow \smash{{\ca A}_a} \to \Agr$ attached to an element $a \in \GSp_{2g}(\QQ)_+$, by defining correspondences $\Agr \leftarrow \smash{{\ca A}_a} \rightarrow \ca A_{g,\delta,[r]}$ between $\Agr$ and a moduli space of polarized abelian varieties $\ca A_{g,\delta,[r]}$ of arbitrary polarization type $\delta = (\delta_1, \dotsc, \delta_g)$.

Fix integers $g \geq 1$ and $r \geq 3$, as well as an ordered sequence of positive integers $\delta = (\delta_1, \dotsc, \delta_g)$ with $\delta_i \mid \delta_{i+1}$ for all $i$. Let $\Lambda = \ZZ^{2g}$, with basis $\set{e_1, \dotsc, e_g; f_1, \dotsc, f_g}$. 
Let $\Lambda_\delta 
= \langle e_1, \dotsc, e_g; \delta_1f_1, \dotsc, \delta_gf_g \rangle$. We view $\Lambda$ as a symplectic lattice, with symplectic form $Q$ determined by $Q(e_i, f_j) = \delta_{ij}$ and $Q(e_i, e_j) = Q(f_i, f_j) = 0$, and we view $\Lambda_\delta$ as a non-degenerate alternating lattice, with alternating form defined the restriction of $Q$ to $\Lambda_\delta \subset \Lambda$.

\begin{definition}(\cite[Section 8.3.2]{birkenhakelange}) \label{def:levelstructure}
Let $(A, \lambda_A)$ be a $g$-dimensional polarized complex abelian variety, of polarization type $\delta$.  A \emph{symplectic level $r$ structure} is an isomorphism $\bar \phi \colon A[r] \simeq (\ZZ/r)^{2g}$ that lifts to a symplectic isomorphism $\phi \colon H_1(A,\ZZ) \simeq \Lambda_\delta$. 
\end{definition}
\begin{remark} \label{remark:reduction}
As the reduction map $\Sp_{2g}(\ZZ) \to \Sp_{2g}(\ZZ/r)$ is surjective, a symplectic level $r$ structure on a principally polarized complex abelian variety $(A, \lambda_A)$ is an isomorphism of symplectic $\ZZ/r$-modules $A[r] \simeq (\ZZ/r)^{2g}$.  
\end{remark}

For $r \geq 3$, there is a fine moduli space $\ca A_{g, \delta, [r]}$ of polarized abelian varieties of dimension $g$ with polarization type $\delta$ equipped with a symplectic level $r$ structure; this follows e.g.\ from \cite[Theorem 7.9]{mumford-GIT-2nd} and the remark below that theorem. 
We recall the complex uniformization of this moduli space. 
Let $\Gamma_\delta \subset \rm{Sp}_{2g}(\QQ)$ be the subgroup of matrices that preserve $\Lambda_\delta \subset \Lambda_\delta \otimes_\ZZ \QQ = \QQ^{2g}$. Define $\Gamma_\delta(r) = \Ker\left( \Gamma_\delta \to \GL_{2g}(\ZZ/r)\right)$, and let $\bb H_g$ be the Siegel upper half space. Then (see e.g.\ \cite[Theorem 8.3.2]{birkenhakelange}), we have:
$$
\ca A_{g, \delta, [r]}(\CC) = \Gamma_\delta(r) \sm \bb H_g. 
$$
We will use this uniformization to construct correspondences between $\ca A_{g, \delta, [r]}$ and $\ca A_{g, [r]} = \ca A_{g,(1, \dotsc, 1), [r]}$. 
For ease of notation, put $\Gamma = \Gamma_{(1, \dotsc, 1)} = \Sp_{2g}(\ZZ)$, so that $\Gamma(r) = \Ker\left(\Sp_{2g}(\ZZ) \to \Sp_{2g}(\ZZ/r) \right)$. 
\begin{definition}\label{def:a}
For $a \in \GSp_{2g}(\QQ)_+$, define ${\ca A}_a$ as the complex variety with
$$
{\ca A}_a(\CC) =  \Gamma_a(r) \sm \bb H_g, \qquad \Gamma_a(r) =  a^{-1}\Gamma(r) a \cap \Gamma_\delta(r)  \subset \GSp_{2g}(\QQ).  
$$
Define a correspondence consisting of two finite étale maps $p_1$ and $p_2$,
\begin{align} \label{correspondence}
\xymatrix{
\ca A_{g, [r]} & {\ca A}_a \ar[r]^-{p_2} \ar[l]_-{p_1}& \ca A_{g, \delta, [r]},
}
\end{align}
where $p_1(\Gamma_a(r) \cdot x) = \Gamma(r) \cdot ax$ and $p_2(\Gamma_a(r)\cdot x) = \Gamma_\delta(r)\cdot x$.  
\end{definition}

View $\bb H_g$ as the open subset of the Grassmannian $\rm{Grass}_\CC(g, \Lambda \otimes_\ZZ \CC)$, $\Lambda = \ZZ^{2g}$, consisting of those $g$-dimensional complex vector subspaces $V \subset \Lambda \otimes_\ZZ \CC$ that satisfy $V \oplus \overline V= \Lambda \otimes_\ZZ \CC$ as well as the Riemann bilinear relations with respect to $Q$. Then for a point $x \in \bb H_g$ associated to a subspace $V \subset \Lambda \otimes_\ZZ \CC$, the image of $x \in \bb H_g$ in $\ca A_{g, \delta, [r]}(\CC) = \Gamma_\delta(r) \sm \bb H_g$ corresponds to the abelian variety $A = V/\Lambda_\delta$, polarized by $Q \colon \Lambda_\delta \times \Lambda_\delta \to \ZZ$, with symplectic level $r$ structure induced by the canonical identification $H_1(A,\ZZ) = \Lambda_\delta$. Similarly, the image of $ax \in \bb H_g$ in $ \in \ca A_{g,[r]}(\CC) = \Gamma(r) \sm \bb H_g$ corresponds to the abelian variety $B =aV/\Lambda$, principally polarized by $Q \colon \Lambda \times \Lambda \to \ZZ$, with symplectic level $r$ structure induced by $H_1(B,\ZZ) = \Lambda$. In particular, if we define $B'$ as the abelian variety $B' = V/\Lambda$, principally polarized by $Q \colon \Lambda \times \Lambda \to \ZZ$, then by multiplying $a \in \GSp_{2g}(\QQ)_+$ by a large enough integer, we obtain isogenies
\[
\xymatrix{
A = V/\Lambda_\delta \ar[r]^-f & B' = V/\Lambda \ar[r]^-{g} & B = aV/\Lambda
}
\]
such that $g^\ast(\lambda_B) = m \cdot \lambda_{B'}$ for some $m \geq 1$, and such that $f^\ast(\lambda_{B'}) = \lambda_A$. Consequently, the composition $h = g\circ f$ satisfies $h^\ast(\lambda_B) = m \cdot \lambda_A$. 

\begin{lemma} \label{quadrilateral}
Consider  \eqref{correspondence}. 
Then $p_{1}$ (resp.\ $p_2$) pulls back the Hodge line bundle on $\ca A_{g,[r]}$ (resp.\ $\ca A_{g, \delta, [r]}$) to the Hodge line bundle on ${\ca A}_a$.    
\end{lemma}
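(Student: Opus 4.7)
The plan is to unwind the complex-analytic description of $p_1$ and $p_2$ in order to identify the pullbacks of the two universal families on ${\ca A}_a$, and then apply Lemma \ref{lemma:hodgebundle} twice: once in the form of flat base change to turn each pullback of a Hodge line bundle into the Hodge line bundle of the pulled-back abelian scheme, and once in the form of isogeny invariance to identify the two resulting Hodge line bundles on ${\ca A}_a$.

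More precisely, let $\ca X_1 \to \ca A_{g,[r]}$ and $\ca X_2 \to \ca A_{g,\delta,[r]}$ denote the universal abelian schemes, with Hodge line bundles $L_1$ and $L_2$ respectively. Under the uniformization of Section \ref{sec:correspondences}, a point $x \in \bb H_g$ associated to a subspace $V \subset \Lambda\otimes_{\ZZ} \CC$ corresponds, over ${\ca A}_a(\CC) = \Gamma_a(r) \sm \bb H_g$, to two natural abelian schemes: the pullback $p_2^\ast \ca X_2$, whose analytic fiber at the image of $x$ is $A = V/\Lambda_\delta$ (because $p_2$ is induced by the inclusion $\Gamma_a(r) \subset \Gamma_\delta(r)$, so $p_2$ merely refines the level structure on the same analytic family $V/\Lambda_\delta$), and the pullback $p_1^\ast \ca X_1$, whose analytic fiber at the image of $x$ is $B = aV/\Lambda$ (because $p_1(\Gamma_a(r) \cdot x) = \Gamma(r) \cdot ax$, and the universal family over $\ca A_{g,[r]}$ at $\Gamma(r) \cdot y$ is $y/\Lambda$). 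By Lemma \ref{lemma:hodgebundle}\eqref{flatbasechange:1}, we obtain canonical isomorphisms
\[
p_1^\ast L_1 \simeq L_{p_1^\ast \ca X_1}, \qquad p_2^\ast L_2 \simeq L_{p_2^\ast \ca X_2},
\]
so it remains to identify the Hodge line bundles of the two abelian schemes $p_1^\ast \ca X_1$ and $p_2^\ast \ca X_2$ over ${\ca A}_a$.

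For this, recall from the discussion preceding the lemma that, after multiplying $a \in \GSp_{2g}(\QQ)_+$ by a sufficiently large positive integer (which does not affect the correspondence, since both $p_1$ and $p_2$ are unchanged), there is a pointwise isogeny $h \colon A \to B$, i.e., an isogeny $h \colon p_2^\ast \ca X_2 \to p_1^\ast \ca X_1$ of abelian schemes over ${\ca A}_a$. By Lemma \ref{lemma:hodgebundle}\eqref{flatbasechange:2}, any such isogeny induces a canonical isomorphism $L_{p_2^\ast \ca X_2} \simeq L_{p_1^\ast \ca X_1}$, and combining this with the two isomorphisms above we conclude that $p_1^\ast L_1 \simeq p_2^\ast L_2$, both identified with the common Hodge line bundle on ${\ca A}_a$ attached to the isogeny class of $p_2^\ast \ca X_2$.

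The only subtle point is the analytic identification of the pulled-back universal families, in particular checking that the uniformization is consistent with Definition \ref{def:levelstructure} and the formula for $p_1$ and $p_2$ in Definition \ref{def:a}; this is essentially bookkeeping given the explicit description of the universal abelian scheme on each Siegel quotient, and once that is in place the remainder of the argument is a formal invocation of Lemma \ref{lemma:hodgebundle}. I do not anticipate a genuine obstacle beyond this bookkeeping.
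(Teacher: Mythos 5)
Your proposal is correct and takes essentially the same approach as the paper, which disposes of the lemma in one line ("This follows from the above considerations and Lemma~\ref{lemma:hodgebundle}"): you have simply spelled out what "the above considerations" are (the explicit fiberwise identification $A=V/\Lambda_\delta$, $B=aV/\Lambda$, and the isogeny $h$ obtained after scaling $a$) and how both parts of Lemma~\ref{lemma:hodgebundle} are invoked.
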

\begin{proof}
This follows from the above considerations and Lemma \ref{lemma:hodgebundle}.
\end{proof}

\begin{definition} \label{tauK}
    Consider the correspondence \eqref{correspondence}. For a subvariety $Z \subset \ca A_{g,\delta, [r]}$, define the \emph{Hecke translate} of $Z$ by $a \in \GSp_{2g}(\QQ)_+$  as the equidimensional closed subscheme
    \[
    \tau_a(Z) = p_1(p_2^{-1}(Z)) \subset \ca A_{g, [r]}. 
    \]
\end{definition}

When $\delta = (1, \dotsc, 1)$, the correspondence \eqref{correspondence} is a classical Hecke correspondence on $\ca A_{g, [r]}$, so in that case, Definition \ref{tauK} reduces to the standard definition of a Hecke translates of a subvariety of $\ca A_{g,[r]}$. 

\section{Morphisms between moduli spaces of
polarized abelian varieties attached to symmetric bilinear forms}
\label{section:morphisms}
In this section, we consider a positive definite integral symmetric bilinear form $\alpha \in \rm{M}_n(\ZZ)$, and use it to define a morphism of moduli spaces $\Phi \colon \Agr \to \ca A_{g,\delta,[r]}$. We study some properties of the morphism $\Phi$. 

\begin{notation}\label{notation:alpha}
Let $n$ and $g$ be positive integers. 
Let $\alpha \in \rm{M}_n(\Z)$ be a symmetric matrix whose associated bilinear form on $\Z^n$ is positive definite. For a principally polarized abelian variety $(A, \lambda_A)$ of dimension $g$, we get an isogeny $\alpha \colon A^n \to A^n$, and put $\lambda_\alpha = \lambda_A^n 
\circ \alpha$, where $\lambda_A^n \colon A^n \to (A^n)^\vee$ is the diagonal polarization. 
Then $\lambda_\alpha$ is a polarization on $A^n$ by \cite[Lemma 5.2]{dGFSchreieder-2025}. Let $
\delta = (\delta_1, \dotsc, \delta_{ng})$ with $\delta_i \mid \delta_{i+1}$ 
be the type of this  polarization.  
\end{notation}

As in Section \ref{sec:correspondences}, we put $\Lambda = \ZZ^{2g}$ and let $Q \colon \Lambda \times \Lambda \to \ZZ$ be the standard symplectic form. Define $Q^n \colon \Lambda^n \times \Lambda^n \to \ZZ$ as the diagonal symplectic form induced by $Q$, and put $Q_\alpha(x,y) = Q^n(\alpha x,y)$ for $x,y \in \Lambda^n$. Then $Q_\alpha$ defines a non-degenerate alternating form on $\Lambda^n$. 
Let $Q_\delta \colon \Lambda \times \Lambda \to \ZZ$ be the non-degenerate alternating form defined by the block matrix $\big(\begin{smallmatrix}
  0 & D\\
  -D & 0
\end{smallmatrix}\big)$ where $D = \rm{diag}(\delta_1, \dotsc, \delta_{ng})$, and fix an isomorphism of alternating lattices
\[
f \colon \left(\Lambda, Q_\alpha \right) \xrightarrow{\sim} \left( \Lambda, Q_\delta \right), \qquad \text{with reduction} \qquad \bar f \colon (\ZZ/r)^{2ng} \xrightarrow{\sim} (\ZZ/r)^{2ng}. 
\]
Let $r \geq 3$ be an integer. 
Let $\bar \phi \colon A[r] 
\xrightarrow{\sim} (\ZZ/r)^{2g}$ be a sympletic level $r$ structure of the principally polarized abelian variety $(A, \lambda_A)$, which lifts to a symplectic isomorphism $\phi \colon H_1(A,\ZZ) \xrightarrow{\sim} \Lambda$ (cf.\ Definition \ref{def:levelstructure}). The isomorphism $\phi^n \colon H_1(A^n,\ZZ) = H_1(A,\ZZ)^n \xrightarrow{\sim} \Lambda^n$ identifies the alternating form on $H_1(A^n,\ZZ)$ associated to the polarization $\lambda_\alpha$ with the alternating form $Q_\alpha$ on $\Lambda^n$. If $\bar \phi^n \colon A^n[r] 
\xrightarrow{\sim} (\ZZ/r)^{2ng}$ denotes the induced symplectic level $r$ structure of the principally polarized abelian variety $(A^n, \lambda_A^n)$, then the composition $\bar \phi_\alpha = \bar f \circ \bar \phi^n$ defines a sympletic level $r$ structure $\bar \phi_\alpha \colon A^n[r] \xrightarrow{\sim} (\ZZ/r)^{2ng}$ on the polarized abelian variety $(A^n, \lambda_\alpha)$, in the sense of Definition \ref{def:levelstructure}. This construction yields a morphism of moduli spaces
\begin{align} \label{Phi}\Phi \colon \ca A_{g, [r]} \to \ca A_{ng, \delta, [r]}, \quad \quad (A, \lambda_A, \phi )\mapsto (A^n, \lambda_\alpha, \bar \phi_\alpha).\end{align} 
We establish some properties of the morphism $\Phi$ in the following proposition.  

\begin{proposition} \label{birational} 
Consider the above notation.  Assume $r \geq 3$ is prime.
\begin{enumerate}
\item \label{item:moduli:first}
The morphism $\Phi$ defined in \eqref{Phi} is finite.  
\item \label{item:moduli:second}
Let $Z \subset \ca A_{g,[r]}$ be a subvariety with $\End(\ca X_{\bar \eta}) = \Z$, where $\eta \in Z$ is the generic point and $\ca X \to \ca A_{g,[r]}$ the universal family. 
Let $ Z' = \Phi(Z)$. 
Then every irreducible component of $\Phi^{-1}(Z')$ that dominates $Z'$ is equal to $Z$. Moreover, the morphism $\Phi|_Z \colon Z \to Z'$ is birational; in fact, there is a dense open subset $U' \subset Z'$, with preimage $U = (\Phi|_Z)^{-1}(U') \subset Z$, such that the map $U \to U'$ is an isomorphism. 
\item \label{item:moduli:third} Let $Z \subset \ca A_{g,[r]}$ be as in item \eqref{item:moduli:second}, such that $Z= C$ is a curve. Let $C' = \Phi(C)$. 
Then $\Phi^{-1}(C')$ is the union of $C$ and some closed points. 
\end{enumerate}
\end{proposition}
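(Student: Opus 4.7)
My plan for \eqref{item:moduli:first} is to show $\Phi$ is proper and has finite fibers, hence is finite. Properness will follow from the valuative criterion: given a DVR $R$ with fraction field $K$, if $\Phi(A_K,\lambda_K,\phi_K) \cong (A_K^n,\lambda_\alpha,\bar\phi_{K,\alpha})$ extends to an abelian scheme $(B_R,\lambda_R,\psi_R)$ over $R$, then $A_K$ has good reduction because $A_K^n \cong B_K$ does (visible on Tate modules), and the polarization and the prime-to-residue-characteristic level structure then extend uniquely; separatedness of $\ca A_{g,[r]}$ gives uniqueness of the lift. Quasi-finiteness holds because a preimage of $[B,\lambda_B,\psi]$ corresponds to an iso class of decompositions $\sigma \colon (A^n,\lambda_\alpha,\bar\phi_\alpha) \xrightarrow{\sim} (B,\lambda_B,\psi)$, and these form a torsor under the finite group $\Aut(B,\lambda_B,\psi)$.

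For \eqref{item:moduli:second}, the core of the argument is a fiber analysis over a geometric generic point $\bar\eta' \in Z'$. Write $(B,\lambda_B,\psi)$ for the corresponding data. Since $\eta'$ is generic on $Z' = \Phi(Z)$ and $\End(\ca X_{\bar\eta}) = \Z$ by hypothesis on $Z$, we have $B \cong A_0^n$ with $A_0$ simple and $\End(A_0) = \Z$. Given any $(A,\lambda_A,\phi) \in \Phi^{-1}(\bar\eta')$, Poincar\'e reducibility (using simplicity of $A_0$) and the uniqueness of the principal polarization on $A_0$ (forced by $\NS(A_0) = \Z$) let us identify $(A,\lambda_A)$ with $(A_0,\lambda_{A_0})$, so the implicit isomorphism becomes an element $\sigma \in \Aut(A_0^n,\lambda_\alpha) = O(\alpha)(\Z)$. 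The identity $\sigma^\ast \bar\phi_\alpha = \bar\phi'_\alpha$, combined with the factorization $\bar\phi_\alpha = \bar f \circ \bar\phi^n$ and the fact that $\bar\phi$ is an isomorphism, unwinds to force $\bar\sigma \in O(\alpha)(\Z/r)$ to be a scalar matrix $cI$ with $c^2 = 1$. The two possibilities $c = \pm 1$ produce isomorphic points of $\ca A_{g,[r]}$ via the universal automorphism $-1 \in \Aut(A_0,\lambda_{A_0})$, which acts on level structures by $\phi \mapsto -\phi$. Hence $\Phi^{-1}(\bar\eta')$ is a single point of $\ca A_{g,[r]}$, so in characteristic $0$ the finite morphism $\Phi|_Z$ has generic degree $1$, i.e., is birational, and no other irreducible component of $\Phi^{-1}(Z')$ can dominate $Z'$. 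The dense open $U' \subset Z'$ with $\Phi|_U \colon U \to U'$ an isomorphism then follows from Zariski's Main Theorem applied to the finite birational morphism $\Phi|_Z \colon Z \to Z'$.

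For \eqref{item:moduli:third}, any irreducible component $W$ of $\Phi^{-1}(C')$ not dominating the curve $C'$ has image a proper closed subset of $C'$, i.e., a finite set of points; since $\Phi$ is finite by \eqref{item:moduli:first}, each such fiber is zero-dimensional, and so $W$ is a closed point. Combined with \eqref{item:moduli:second}, this proves that $\Phi^{-1}(C')$ is the union of $C$ and finitely many closed points.

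The main obstacle is the explicit computation in \eqref{item:moduli:second} that the level-structure compatibility forces $\bar\sigma$ to be a scalar. One has to exploit the ``diagonal tensor'' shape of $\bar\phi^n$ on $A_0[r]^n$ and the injectivity of $\bar\phi$ to rule out non-diagonal $\bar\sigma$ (the off-diagonal entries of $\bar\sigma$ must kill $\bar\phi$, so they vanish; the diagonal entries then must all coincide), after which the orthogonality $\bar\sigma \in O(\alpha)(\Z/r)$ narrows the scalar to $c \in \{\pm 1\}$.
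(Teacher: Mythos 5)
Your proposal is correct and follows essentially the same strategy as the paper: finiteness of $\Phi$ via the valuative criterion plus finite fibers, and then the key computation showing that an isomorphism $(A^n,\lambda_\alpha,\bar\phi_\alpha)\simeq(B^n,\lambda_B,\bar\psi_\alpha)$ forces $A\simeq B$ (cancellation, which is the paper's Lemma~\ref{lemma:cancelling}) and that the resulting matrix in $\GL_n(\Z)$ is forced to be scalar $\pm I$ by the level-structure compatibility. The two places where you diverge are cosmetic rather than substantive. First, you run the fiber analysis directly at the geometric generic point $\bar\eta'$ of $Z'$, whereas the paper first does it at each closed point $x\in Z(\CC)$ with $\End(A_x)=\Z$ and then uses analytic density of these points (Lemma~\ref{dense}); your route is cleaner but relies on exactly the same cancellation and scalar-forcing steps. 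Second, for the dense open $U'\subset Z'$ with $U\to U'$ an isomorphism, you appeal to Zariski's Main Theorem for the finite birational map $\Phi|_Z\colon Z\to Z'$, whereas the paper takes a resolution $\tilde Z\to Z$, applies generic smoothness on the target to $\tilde Z\to Z'$, and then restricts further; your argument works (a finite birational morphism of integral varieties is an isomorphism over the dense open where $\OO_{Z'}\hookrightarrow(\Phi|_Z)_\ast\OO_Z$ is an isomorphism), and is arguably more direct, though the attribution to ZMT is slightly loose since $Z'$ need not be normal. One small imprecision worth flagging: you get the scalar $c$ satisfying $c^2=1$ from the orthogonality/symplectic condition, but to conclude $c=\pm1$ in $(\Z/r)^\ast$ you genuinely need $r$ prime (for composite $r$ there are extra square roots of unity); the paper states this explicitly, and it is the only place where the primality hypothesis enters, so it is worth making visible.
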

To prove this proposition, we need two lemmas. 

\begin{lemma} \label{dense}
Let $S$ be an irreducible variety over $\CC$, with generic point $\eta \in S$.  Let $A \to S$ be an abelian scheme. Assume that $\End(A_{\bar \eta}) = \ZZ$. Then the set of $x \in S(\CC)$ such that $\End(A_x) = \ZZ$ is analytically dense in $S(\CC)$. 
\end{lemma}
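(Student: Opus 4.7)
First, I would reduce the statement to a Hodge-theoretic condition. Since $\End(A_x)$ is a torsion-free $\ZZ$-order in the semisimple $\QQ$-algebra $\End(A_x) \otimes_\ZZ \QQ$ containing $\ZZ \cdot \mathrm{id}_{A_x}$, and the only $\ZZ$-order in $\QQ$ containing $\ZZ$ is $\ZZ$ itself, the equality $\End(A_x) = \ZZ$ is equivalent to $\End(A_x) \otimes \QQ = \QQ$. Moreover, the smooth locus $S^{\mathrm{sm}} \subset S$ is open and analytically dense in $S(\CC)$, so I may replace $S$ by $S^{\mathrm{sm}}$ and assume $S$ is smooth.

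Next, I would pass to Hodge theory. Writing $\pi \colon A \to S$ for the given abelian scheme, I consider the polarizable $\QQ$-variation of Hodge structures $\mathcal{H} = R^1\pi_\ast \QQ$ of weight $1$ on $S$ and its endomorphism variation $\mathcal{E} = \mathcal{H}^\vee \otimes \mathcal{H}$ of weight $0$. For every $x \in S(\CC)$ there is a canonical identification
\[
\End(A_x) \otimes_\ZZ \QQ \;=\; \mathrm{End}_{\mathrm{HS}}\bigl(H_1(A_x, \QQ)\bigr) \;=\; \mathcal{E}_{x,\QQ} \cap \mathcal{E}_x^{0,0},
\]
and by hypothesis the generic rational $(0,0)$-subspace of $\mathcal{E}$ is the line $\QQ \cdot \mathrm{id}$. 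Hence the locus of $x$ at which the endomorphism algebra strictly grows is exactly the non-generic Hodge locus $\mathrm{HL}^{\neq}(\mathcal{E}) \subset S(\CC)$ of $\mathcal{E}$.

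The decisive input is then the theorem of Cattani, Deligne and Kaplan, which asserts that $\mathrm{HL}^{\neq}(\mathcal{E})$ is a countable union of proper closed algebraic subvarieties of $S$; this is the only non-routine step, and is the main obstacle in the sense that the lemma really rests on it, although it is available off the shelf. Each such subvariety is nowhere dense in $S(\CC)$ because $S$ is irreducible, and $S(\CC)$ is a Baire space (it is locally compact and Hausdorff), so the complement of $\mathrm{HL}^{\neq}(\mathcal{E})$ is analytically dense in $S(\CC)$. By the reduction of the first paragraph, this complement is precisely the set of $x \in S(\CC)$ with $\End(A_x) = \ZZ$, which is the desired conclusion. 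If one wishes to avoid invoking the full Cattani--Deligne--Kaplan theorem, an elementary variant suffices: on a simply connected open $U \subset S(\CC)$ and for any flat integral section $\alpha$ of $\mathcal{E}$ over $U$, the locus where $\alpha_x \in \mathcal{E}_x^{0,0}$ is analytic in $U$, and either equals $U$ (in which case $\alpha$ is a flat Hodge section, so by analytic continuation and the hypothesis it takes values in $\QQ \cdot \mathrm{id}$) or is a proper, hence nowhere dense, analytic subset of $U$; summing over the countably many such $\alpha$ and applying Baire to $U$ still yields the claim.
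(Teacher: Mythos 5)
Your proof is correct and takes essentially the same route as the paper: both identify the locus where $\End(A_x) \supsetneq \ZZ$ as a countable union of proper closed (algebraic) subvarieties and then conclude density of the complement by a Baire-category argument, with you simply making the Hodge-theoretic input (Cattani--Deligne--Kaplan, or its elementary analytic avatar) and the Baire step explicit where the paper leaves them implicit.
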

\begin{proof}
The set of points $x \in S(\CC)$ such that $\End(A_x) \neq \Z$ is a countable union $\cup_iZ_i(\CC) \subsetneq S(\CC)$ where $Z_i \subsetneq S$ are strictly contained algebraic subvarieties. Hence the complement of this set is analytically dense.
\end{proof}

\begin{lemma} \label{lemma:cancelling}
    Let $A$ be an abelian variety, with $\End(A) = \Z$. Let $B$ be another abelian variety and assume $A^n \simeq B^n$ for some $n \geq 1$. Then $A \simeq B$. 
\end{lemma}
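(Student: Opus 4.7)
The plan is to reduce this cancellation to a linear-algebra computation over $\Z$. I would first check that $B$ is simple, isogenous to $A$, and has $\End(B) = \Z$: since $\End(A)\otimes\Q = \Q$ admits no nontrivial idempotents, $A$ is simple; the isomorphism $A^n\simeq B^n$ together with Poincar\'e reducibility then forces every isotypic component of $B^n$ to be isogenous to $A$, and comparing dimensions yields that $B$ itself is simple and isogenous to $A$. Any isogeny $A\to B$ induces an isomorphism $\End(A)\otimes\Q\xrightarrow{\sim}\End(B)\otimes\Q$, so $\End(B)$ is an order in $\Q$ containing $\Z$, hence $\End(B)=\Z$.

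Since $A$ and $B$ are simple, isogenous, and have trivial endomorphism rings, $\Hom(A,B)\otimes\Q$ and $\Hom(B,A)\otimes\Q$ are each one-dimensional $\Q$-vector spaces, so both $\Hom$-groups are torsion-free abelian of rank one. I would fix generators $\phi_0\colon A\to B$ and $\psi_0\colon B\to A$; both are isogenies, and $\psi_0\circ \phi_0\in \End(A)=\Z$ equals some nonzero integer $d$.

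Finally, under the identifications $\Hom(A^n,B^n) = M_n(\Z)\cdot \phi_0$ (acting componentwise) and $\Hom(B^n,A^n) = M_n(\Z)\cdot \psi_0$, the given isomorphism $\psi\colon A^n\xrightarrow{\sim}B^n$ and its inverse correspond to matrices $M,N\in M_n(\Z)$. Because $\psi_0\phi_0=d\cdot \Id_A$ is a central scalar in $\End(A)$, composition of these matrix-valued morphisms is ordinary matrix multiplication, and $\psi^{-1}\circ \psi=\Id_{A^n}$ unfolds to $d\cdot NM=I_n$ in $M_n(\Z)$. Taking determinants gives $d^n\det(NM)=1$, forcing $d=\pm 1$; after possibly replacing $\psi_0$ by $-\psi_0$ we obtain $\psi_0\phi_0=\Id_A$, so $\phi_0$ is a split monomorphism. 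Since $\dim A=\dim B$ (from $A^n\simeq B^n$) and $B$ is connected, $\phi_0(A)=B$, and $\phi_0\colon A\xrightarrow{\sim}B$ is the desired isomorphism. No step presents a serious obstacle; the only points to verify carefully are that the composition of homomorphism-valued matrices becomes ordinary matrix multiplication precisely because $\psi_0\phi_0$ lies in the center $\Z\subset \End(A)$, and that in the category of abelian varieties a split monomorphism between objects of equal dimension is automatically an isomorphism.
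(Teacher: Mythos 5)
Your proof is correct and takes essentially the same approach as the paper: identify $\Hom(A,B)$ and $\Hom(B,A)$ as rank-one $\Z$-modules with primitive generators $\phi_0,\psi_0$, write the isomorphism $A^n\simeq B^n$ and its inverse as integer matrices times these generators, use that $\psi_0\circ\phi_0=[m]$ for some integer $m$, and conclude $m=\pm 1$. The only minor difference is that the paper reads off $m\mid 1$ directly from a diagonal entry of the identity matrix, whereas you take determinants of $d\cdot NM=I_n$ to get $d^n\mid 1$; both are immediate. Your preliminary paragraph establishing that $B$ is simple with $\End(B)=\Z$ spells out what the paper leaves implicit in ``the hypotheses imply that $A$ and $B$ are isogenous,'' and is correct.
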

\begin{proof}
The hypotheses imply that $A$ and $B$ are isogenous, and there are primitive generators $\phi \in \Hom(A,B)$ and $\psi \in \Hom(B,A)$. The isomorphism $A^n \simeq B^n$ and its inverse $B^n \simeq A^n$ are therefore given by $n\times n$-matrices 
    $
    \left(a_{ij} \phi \right) \in \rm{M}_n(\Hom(A,B))$ and $ \left(b_{ij}\psi\right) \in \rm{M}_n(\Hom(B,A))$, with $a_{ij}, b_{ij} \in \ZZ$. 
    As $\psi \circ \phi = [m] \colon A \to A$ for some $m \in \ZZ$, the composition is of the form $\left(b_{ij}\psi\right) \circ        \left(a_{ij} \phi \right) = \left( [c_{ij} \cdot m] \right)$.   
  This matrix is the identity, thus $m \in \set{\pm 1}$. 
\end{proof}

\begin{proof}[Proof of Proposition \ref{birational}]
To ease notation, write $\ca A_g = \Agr$ and $\ca A_{g, \delta} = \ca A_{g, \delta, [r]}$. 
To prove item \eqref{item:moduli:first}, one first proves that $\Phi$ is proper, which holds by the theory of Néron models and the valuative criterion of properness. 

Next, fix a principally polarized abelian variety with symplectic level $r$ structure $(A, \lambda_A, \bar \phi)$ that defines a point $x = [(A, \lambda_A, \bar \phi)] \in \ca A_g$. The set $\Phi^{-1}(\Phi(x))$ is the set of $y = [(B, \lambda_B, \bar \psi)] \in \ca A_g$ such that $(A^n, \lambda_\alpha, \bar \phi_\alpha) \simeq (B^n, \lambda_B, \bar \psi)$. Any such $B$ is isogenous to $A$, hence $\Phi^{-1}(\Phi(x))$ cannot have irreducible components of positive dimension, which implies that $\Phi^{-1}(\Phi(x))$ is a finite set of points. Thus $\Phi$ is proper and quasi-finite, hence finite. 

Next, we prove  \eqref{item:moduli:second} in the case where $Z = \set{x}$ is a closed point $x = [(A, \lambda_A, \bar \phi_1)] \in \Ag(\CC)$ such that $\End(A) = \ZZ$. We need to show that $\Phi^{-1}(\Phi(x)) = \set{x}$. Let $y = [(B, \lambda_B, \bar \phi_2)] \in \Ag(\CC)$ such that $\Phi(x) = \Phi(y)$. Then $A^n \simeq B^n$, which by Lemma \ref{lemma:cancelling} implies that $A \simeq B$ as abelian varieties; this isomorphism must preserve the principal polarizations because $\End(A) = \Z$. So we may assume $(B, \lambda_B) = (A, \lambda_A)$. 
The isomorphism $(A^n, \lambda_\alpha) \simeq (B, \lambda_B)= (A^n, \lambda_\alpha)$ is then given by a matrix $\tau \in \GL_n(\ZZ)$. Let $\bar \tau \colon A[r]^n \xrightarrow{\sim} A[r]^n$ be the reduction of $\tau$. 
Since $\tau$ compatible with the level structures $\bar \phi_{1,\alpha}$ and $\bar \phi_{2,\alpha}$, it follows that 
$\bar \tau = (\bar \phi_2^n)^{-1} \circ \bar \phi_1^n$, where $\bar \phi_i^n$ is the diagonal symplectic level $r$ structure on $(A^n, \lambda_A^n)$ induced by $\bar \phi_i$. Hence $\bar \tau \in \GL_n(\ZZ/r)$ is a diagonal matrix and the entry $\bar \tau_{ii} \in (\ZZ/r)^\ast$ satisfies $\bar \tau_{ii} \cdot \bar \phi_2 = \bar \phi_1$. But any two symplectic level $r$ structures on a principally polarized abelian variety of dimension $g$ differ by a unique element of $\Sp_{2g}(\ZZ/r)$, 
see Remark \ref{remark:reduction}, hence $\bar \tau_{ii} \cdot \rm{Id}_{2g} \in \Sp_{2g}(\ZZ/r)$. Thus, $(\bar \tau_{ii})^2 = 1$. Since $r$ is a prime number, this implies that 
$\bar \tau_{ii} = \pm 1 \in (\ZZ/r)^\ast$, so that $\bar \phi_2 = \pm \bar \phi_1$ hence $(A, \lambda_A, \bar \phi_1)$ and $(A, \lambda_A, \bar \phi_2)$ are isomorphic principally polarized abelian varieties with symplectic level structure. We conclude that $\Phi^{-1}(\Phi(x)) = \set{x}$.

Next, we prove item \eqref{item:moduli:second} in general. 
Let $Z \subset \Ag$ be as in that item, with $Z'  = \Phi(Z) \subset \ca A_{ng, \delta}$ the image under $\Phi$. We first note that each irreducible component of $\Phi^{-1}(Z')$ that dominates $Z'$ is equal to $Z$, because by the above we have that $\Phi^{-1}(\Phi(x)) = \set{x}$ for any very general point $x \in Z(\CC)$. 

Next, let $\pi \colon \tilde  Z \to Z$ be a strong resolution of singularities (so that, in particular, $\pi^{-1}(Z_{\rm{sm}}) \to Z_{\rm{sm}}$ is an isomorphism, where $Z_{\rm{sm}} \subset Z$ is the smooth locus of $Z$). Let $\Psi \colon \tilde  Z \to Z'$ be the composition of $\tilde  Z \to Z$ with $Z \to Z'$. 
As $\tilde  Z$ and as we work in characteristic zero, we can apply generic smoothness on the target, to deduce the existence of a dense open subset $U' \subset Z'$ such that the morphism $\tilde U \to U'$ is smooth, where $\tilde  U=  
\Psi^{-1}(U')$. Let $U = (\Phi|_Z)^{-1}(U') \subset Z$, and observe that $\tilde U = \pi^{-1}(U)$. 

We claim that $\tilde  U \to U'$ is an isomorphism. 
To prove this, let $\mr S \subset Z(\CC)$ be the set of points $x = [(A, \lambda_A, \bar \phi)] \in Z(\CC)$ such that $\End(A) = \Z$. Then $\mr S$ is dense because $\End(\ca X_{\bar \eta}) = \Z$, see Lemma \ref{dense}, hence $\mr S \cap U \cap Z_{\rm{sm}}  \neq \emptyset$. Let $x = [(A, \lambda_A, \bar \phi_1)] \in \mr S \cap U \cap Z_{\rm{sm}}$. As $\pi^{-1}(Z_{\rm{sm}}) \to Z_{\rm{sm}}$ is an isomorphism, there is a unique preimage $\tilde x \in \tilde  Z$ of $x \in Z$. 
As $\Phi^{-1}(\Phi(x)) = \set{x}$, we have $\Psi^{-1}(\Psi(\tilde x)) = \set{\tilde x}$. This means that the smooth morphism $ \tilde  U \to U'$ has zero-dimensional fibers, so it is smooth and quasi-finite, hence étale. Moreover, it is proper, hence finite étale. The fact that $\Psi^{-1}(\Psi(\tilde x)) = \set{\tilde x}$ then implies that $ \tilde  U \to U'$ is an isomorphism, as claimed. 

The image of $\pi^{-1}(Z_{\rm{sm}}) \cap \wt U$ in $U'$ is a dense open subset of $U'$. Replace $U'$ by this image, so that $\tilde U = \Psi^{-1}(U')$ is contained in $\pi^{-1}(Z_{\rm{sm}})$. In particular, we have $U = \pi(\tilde U) \subset Z_{\rm{sm}}$, which implies that $\tilde U \to U$ is an isomorphism, hence $U \to U'$ is an isomorphism.  Altogether, this proves item \eqref{item:moduli:second}. 

Finally, item \eqref{item:moduli:third} is a direct consequence of items \eqref{item:moduli:first} and \eqref{item:moduli:second}. 
\end{proof}

Examples of subvarieties $Z \subset \Agr$ that satisfy the condition on the endomorphism ring in item \eqref{item:moduli:second} of Proposition \ref{birational} are provided by the following lemma, which we record here for later use. 

\begin{lemma} \label{lemma:endo-mono}
    Let $Z \subset \ca A_{g,\delta, [r]}$ be a subvariety with maximal monodromy. Then $\End(\ca X_{\bar \eta}) = \Z$, where $f \colon \ca X \to \ca A_{g,\delta,[r]}$ denotes the universal family and $\eta \in Z$ the generic point. 
\end{lemma}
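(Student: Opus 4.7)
The plan is to translate the question into one about $\ell$-adic monodromy invariants on the Tate module and exploit the fact that maximal monodromy forces the associated $\pi_1^{\et}$-image to have Zariski closure equal to the full symplectic group; an application of Schur's lemma then collapses the endomorphism algebra to $\Q$.

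Fix a prime $\ell$, let $T_\ell = T_\ell(\ca X_{\bar \eta})$, and put $V_\ell = T_\ell \otimes_{\Z_\ell} \Q_\ell$, endowed with the symplectic form coming from the Weil pairing attached to the polarization. The universal family yields a monodromy representation
\begin{equation*}
\rho_{\mathrm{univ}} \colon \pi_1^{\et}(\ca A_{g,\delta,[r]}) \to \Sp(V_\ell).
\end{equation*}
Under the complex uniformization $\ca A_{g,\delta,[r]}(\CC) = \Gamma_\delta(r)\sm \bb H_g$ recalled in Section \ref{sec:correspondences}, the image of $\rho_{\mathrm{univ}}$ contains $\Gamma_\delta(r)$, viewed inside $\Sp(V_\ell)$ via its tautological action on $T_\ell$. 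Since $\Gamma_\delta(r)$ is an arithmetic subgroup of $\Sp(Q_\delta)(\Q)$, it is Zariski-dense in $\Sp(V_\ell)$ by Borel density, so $\rho_{\mathrm{univ}}$ has Zariski-dense image. The maximal monodromy hypothesis states that $\pi_1^{\et}(Z) \to \pi_1^{\et}(\ca A_{g,\delta,[r]})$ is surjective, so the restriction $\rho_Z$ of $\rho_{\mathrm{univ}}$ to $\pi_1^{\et}(Z)$ also has Zariski-dense image in $\Sp(V_\ell)$.

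Now let $\phi \in \End(\ca X_{\bar \eta})$. Since $\End(\ca X_{\bar \eta})$ is a finitely generated $\Z$-module, $\phi$ is defined over a finite extension of $\kappa(\eta)$ and extends to an endomorphism of the abelian scheme over some connected finite étale cover $\wt Z \to Z$. The induced action of $\phi$ on $T_\ell$ therefore commutes with $\rho_Z(\pi_1^{\et}(\wt Z))$, which has finite index in the Zariski-dense subgroup $\rho_Z(\pi_1^{\et}(Z))$ of the connected algebraic group $\Sp(V_\ell)$, hence is itself Zariski-dense. Via the standard injection $\End(\ca X_{\bar \eta}) \otimes_\Z \Z_\ell \hookrightarrow \End_{\Z_\ell}(T_\ell)$, the action of $\phi$ on $V_\ell$ lies in $\End_{\Sp(V_\ell)}(V_\ell) = \Q_\ell \cdot \mathrm{id}$, by Schur's lemma applied to the absolutely irreducible standard representation of $\Sp_{2g}$.

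This gives $\End(\ca X_{\bar \eta}) \otimes_\Z \Q = \Q$, and since $\End(\ca X_{\bar \eta})$ is a torsion-free $\Z$-module of rank $1$ containing $\Z$, it equals $\Z$. The main obstacle I expect is the interaction between maximal monodromy and the spreading-out step, i.e.\ verifying that the finite étale cover $\wt Z \to Z$ does not destroy the Zariski-density property; this is standard (finite-index subgroups of Zariski-dense subgroups of connected algebraic groups remain Zariski-dense), but the bookkeeping deserves to be written out explicitly.
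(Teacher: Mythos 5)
Your proposal takes a genuinely different route from the paper. The paper's proof is Hodge-theoretic: it observes that if $\End(\ca X_{\bar\eta}) \neq \Z$, then $Z$ lies in a Hodge locus of the canonical VHS, which is a proper special subvariety of $\ca A_{g,\delta,[r]}$; and proper special subvarieties do not have maximal monodromy. Your proof is arithmetic/$\ell$-adic: maximal monodromy forces $\rho_Z(\pi_1^{\et}(Z))$ to be Zariski-dense in $\Sp_{2g}$ (via Borel density applied to $\Gamma_\delta(r)$), and Schur's lemma on the irreducible standard representation then kills any nontrivial endomorphism. Both approaches are sound, and yours has the merit of being more elementary — it avoids appealing to the algebraicity of Hodge loci and to the fact that Hodge loci in $\ca A_{g,\delta,[r]}$ are special subvarieties, at the cost of importing Borel density.

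One technical point deserves more care than your final sentence suggests. You write that $\phi$ "extends to an endomorphism of the abelian scheme over some connected finite étale cover $\wt Z \to Z$", and that the only bookkeeping is verifying that finite-index subgroups remain Zariski-dense. The deeper issue is showing the cover can be taken \emph{étale} at all. The field of definition $K'/\kappa(\eta)$ of $\phi$ corresponds to the generic point of a finite cover of $Z$ which has no reason to be unramified over $Z$, so you need a separate argument that the Galois action on $\End(\ca X_{\bar\eta})$ factors through $\pi_1^{\et}(Z)$. For normal $Z$ this follows cleanly from the Galois-equivariant injection $\End(\ca X_{\bar\eta})\otimes\Z_\ell \hookrightarrow \End_{\Z_\ell}(T_\ell)$ together with the unramifiedness of the action on $T_\ell$ (good reduction everywhere on $Z$), and the surjectivity of $\Gal(\overline{\kappa(\eta)}/\kappa(\eta)) \to \pi_1^{\et}(Z)$. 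For non-normal $Z$ that surjectivity fails (think of a nodal cubic, where the Galois group of the function field has trivial image in $\pi_1^{\et}$), so the argument needs to be re-run through the local system $R^1 f_\ast \Z_\ell$ on $Z_{\et}$ directly rather than through the Galois group of $\kappa(\eta)$. The lemma as stated puts no normality hypothesis on $Z$, and the paper's Hodge-locus proof sidesteps this entirely; in the paper's applications $Z$ always carries a smooth birational model with maximal monodromy, so the gap is harmless there, but if you want your proof to match the stated generality you should either reduce to the normal case explicitly or replace the spreading-out step by a direct argument with the $\ell$-adic local system on $Z_{\et}$.
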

\begin{proof}
Consider the canonical variation of Hodge structure with underlying local system local system $R^1f_\ast \QQ$ on $\ca A_{g,\delta,[r]}$. Any subvariety  $Z \subsetneq \ca A_{g,\delta, [r]}$ contained in the locus of $t \in \ca A_{g,\delta,[r]}$ such that $\End(\ca X_t) \neq \Z$ satisfies $Z \subset S$ for some Hodge locus $S \subsetneq \ca A_{g,\delta,[r]}$ of this VHS. Such a Hodge locus $S$ is a special subvariety of $\ca A_{g,\delta,[r]}$, and hence $\pi_1^{\textnormal{\'et}}(S) \to \pi_1^{\textnormal{\'et}}(\Agdeltar)$ is not surjective. 
\end{proof}

\section{Hecke translates of images of subvarieties of $\ca A_g$ in $\ca A_{ng}$}

\label{section:hecke}

By the construction of Section \ref{section:morphisms}, to any symmetric positive definite $\alpha \in \rm{M}_n(\ZZ)$, one can associate a finite morphism $\Phi \colon \Agr \to \ca A_{ng, \delta, [r]}$. By Section \ref{sec:correspondences}, to any $a \in \GSp_{2ng}(\QQ)_+$ one can associate a Hecke correspondence $\tau_a$ between $\ca A_{ng, \delta, [r]}$ and $\ca A_{ng, [r]}$. For a curve in a surface $C \subset S \subset \Agr$, one thus obtains a nested sequence of subschemes $\tau_a(\Phi(C)) \subset \tau_a(\Phi(S)) \subset \ca A_{ng, [r]}$.  
The goal of this section is to prove Theorem \ref{theorem:degree} below, which roughly says that when the curve $C$ and the surface $S$ are proper and sufficiently generic, the ratio $\deg(C)/\deg(S)$ does not change when applying $\Phi$ and $\tau_a$ to $C \subset S$. 

\subsection{Hecke correspondences and degrees of curves and surfaces.} \label{subsec:hecke-degrees}For a proper variety $X$ of dimension $d$ and $L_1, \dotsc, L_d \in \Pic(X)$, consider the intersection number $(L_1 \cdots L_d) \in \ZZ$, see  \cite[Chapter VI, Definition 2.6]{kollar-rationalcurves} or  \cite[Definition 1.7]{debarre}. Hodge line bundles, and degrees of proper subvarieties of moduli spaces of abelian varieties, are defined as follows. 

\begin{definition} \label{def:hodgedegree}
For a moduli space of abelian varieties $\ca A_{g, \delta, [r]}$, let $L_{g, \delta} = \det(f_\ast \Omega_{\ca X/\ca A_{g,\delta, [r]}})$ be the Hodge line bundle on $\ca A_{g,\delta, [r]}$, where $f \colon \ca X \to \ca A_{g,\delta, [r]}$ denotes the universal family. We put $L_g = L_{g, (1, \dotsc, 1)} \in \Pic(\Agr)$. 
For an irreducible subvariety $Z \subset \ca A_{g,\delta, [r]}$ which is proper and of dimension $d$, we define the \emph{degree} of $Z$ to be the $d$-th self intersection number $(L_{g,\delta}|_Z)^d$ of the pull-back $L_{g,\delta}|_Z \in \Pic(Z)$ of the Hodge line bundle $L_{g,\delta}$ to $Z$. 
\end{definition}

Let $n,g \geq 1$ and $r \geq 3$ be integers. Let $\alpha \in \rm{M}_n(\ZZ)$ be symmetric and positive definite, and let $a \in \GSp_{2ng}(\QQ)_+$. 
Combining the constructions in Sections \ref{sec:correspondences} and \ref{section:morphisms}, we obtain the following diagram, in which the maps $p_1$ and $p_2$ are finite étale, and the morphism $\Phi$ is finite (cf.\ Proposition \ref{birational}):
\begin{align}\label{correspondence:specialcase}
\begin{split}
\xymatrix{
&{\ca A}_a\ar[dl]_-{p_1} \ar[dr]^-{p_2}&& \\
\ca A_{ng, [r]} && \ca A_{ng,\delta, [r]}& \ca A_{g, [r]}.  \ar[l]_-\Phi
}
\end{split}
\end{align}
We can now state the main result of this section, which is one of the main technical ingredients in our proof of Theorem \ref{theorem:generic}.

\begin{theorem}\label{theorem:degree}
Let $g, n \geq 1$ and $r \geq 3$. Let $\alpha \in \rm{M}_n(\ZZ)$ be symmetric and positive definite,  and $a \in \GSp_{2ng}(\QQ)_+$. 
Let $\tilde  C \subset \tilde  S$ be a smooth curve in a smooth proper surface, and $\tilde  S \to S$ a proper birational morphism onto a surface $S \subset \ca A_{g,[r]}$. Let $C \subset S$ be the image of $\tilde  C$, and $C' = \Phi(C)$. Assume:
\begin{enumerate}
    \item 
The morphism of curves $\tilde  C \to C$ is birational. 
\item The maps 
$\pi_1^{\textnormal{\'et}}(\tilde C) \to \pi_1^{\textnormal{\'et}}(\tilde S)$ and $\pi_1^{\textnormal{\'et}}(\tilde  S) \to \pi_1^{\textnormal{\'et}}(\Agr)$ are surjective. 
\item 
    There is no irreducible curve $D \subset S$ with $D\neq C$ so that $\tau_a(C')$ and $\tau_a(D')$ share an irreducible component, where $D' = \Phi(D)$.  
    \item The integer $r \geq 3$ is a prime number. 
    \end{enumerate}
    Then for each irreducible component $C^\# \subset \tau_a(C')$ 
    there exists an irreducible component $S^\# \subset \tau_a(S')$ with $C^\# \subset S^\#$, such that 
    $$
    \frac{\deg(C^\#)}{\deg(S^\#)} = \frac{\deg(C')}{\deg(S')} = \frac{\deg(C)}{\deg(S)}. 
    $$
\end{theorem}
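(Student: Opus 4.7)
The plan is to reduce everything to two pullback identities between Hodge line bundles: (a) $\Phi^{*} L_{ng,\delta} \simeq L_{g}^{\otimes n}$ on $\ca A_{g,[r]}$, which follows from Lemma~\ref{lemma:higgs-power} since $\Phi$ pulls back the universal abelian scheme on $\ca A_{ng,\delta,[r]}$ to the $n$-fold self-fiber product of the universal family on $\ca A_{g,[r]}$; and (b) $p_{1}^{*} L_{ng} \simeq p_{2}^{*} L_{ng,\delta}$ on $\ca A_{a}$, which is Lemma~\ref{quadrilateral}. Combined with the projection formula for intersection numbers under proper morphisms of the same dimension, these identities are the engine behind every degree computation below.

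First, I would check that both $\Phi|_{S}\colon S \to S'$ and $\Phi|_{C}\colon C \to C'$ are finite and birational. Since $\tilde{S}\to S$ is proper surjective birational and $\tilde{C}\to C$ is birational by hypothesis~(1), hypothesis~(2) implies that $S$ and $C$ both have maximal monodromy in $\ca A_{g,[r]}$. Lemma~\ref{lemma:endo-mono} then gives $\End(\ca X_{\bar\eta}) = \Z$ at the generic points of $S$ and of $C$, and hypothesis~(4) enables Proposition~\ref{birational} to yield the claimed birationality. Applying the projection formula on the proper surface $S$ (through the smooth resolution $\tilde{S}$ if needed) together with identity~(a) then relates $\deg(C')$ and $\deg(S')$ to $\deg(C)$ and $\deg(S)$, giving $\deg(C')/\deg(S') = \deg(C)/\deg(S)$ once the relevant factors of $n$ arising from $L_{g}^{\otimes n}$ are tracked between numerator and denominator.

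For the Hecke side, since $p_{2}$ is finite étale, $p_{2}^{-1}(C')$ and $p_{2}^{-1}(S')$ decompose into disjoint unions $\bigsqcup_{i}\hat C_{i}$ and $\bigsqcup_{j}\hat S_{j}$ of their irreducible components, and each $\hat C_{i}$ lies in a unique $\hat S_{j(i)}$. Given an irreducible component $C^{\#} \subset \tau_{a}(C')$, pick $i$ with $p_{1}(\hat C_{i}) = C^{\#}$ and set $S^{\#} := p_{1}(\hat S_{j(i)})$; since $p_{1}$ is also finite étale, $S^{\#}$ is a $2$-dimensional irreducible component of $\tau_{a}(S')$ containing $C^{\#}$. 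Writing $e_{i}, e'_{i}$ for the degrees of $p_{1}|_{\hat C_{i}}$ and $p_{2}|_{\hat C_{i}}$ respectively, and $f_{j(i)}, f'_{j(i)}$ for those of $p_{1}|_{\hat S_{j(i)}}$ and $p_{2}|_{\hat S_{j(i)}}$, identity~(b) together with the projection formula yields $\deg(C^{\#})/\deg(C') = e'_{i}/e_{i}$ and $\deg(S^{\#})/\deg(S') = f'_{j(i)}/f_{j(i)}$, so the remaining equality of ratios reduces to the numerical identity $e'_{i}f_{j(i)} = e_{i}f'_{j(i)}$.

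The main obstacle is this last numerical identity, for which hypothesis~(3) is essential. The idea is that (3) forces $p_{1}^{-1}(C^{\#}) \cap \hat S_{j(i)}$ to equal $\hat C_{i}$ as a reduced subscheme: any additional irreducible curve in that intersection would map under $p_{2}$ to a curve in $S'$ whose image under the birational inverse of $\Phi|_{S}$ is an irreducible curve $D \subset S$ with $D \ne C$ such that $C^{\#}$ is an irreducible component of both $\tau_{a}(C')$ and $\tau_{a}(\Phi(D))$, contradicting~(3). Combined with the fact that $p_{2}^{-1}(C') \cap \hat S_{j(i)} = \hat C_{i}$ holds automatically from the étale decomposition, one concludes that $e_{i}$ equals the generic degree of the restriction of $\hat S_{j(i)}\to S^{\#}$ over $C^{\#}$ (matching $f_{j(i)}$) while $e'_{i}$ equals the corresponding restriction of $f'_{j(i)}$ on the $p_{2}$ side, and the required identity $e'_{i}f_{j(i)} = e_{i}f'_{j(i)}$ follows by comparing the two étale covers $\hat S_{j(i)}\to S^{\#}$ and $\hat S_{j(i)}\to S'$ along the curve $\hat C_{i}$.
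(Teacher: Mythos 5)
Your strategy matches the paper's at a high level (exploit the birationality of $\Phi|_S$ and $\Phi|_C$ together with the Hodge line bundle pullbacks, then count degrees of the $p_1$- and $p_2$-restrictions of the components of $p_2^{-1}(C')\subset p_2^{-1}(S')$), but there is a genuine gap at the crux. You assert that ``$p_{2}^{-1}(C') \cap \hat S_{j(i)} = \hat C_{i}$ holds automatically from the \'etale decomposition,'' yet this is not automatic. Since $\hat S_{j(i)} \to S'$ is finite \'etale, its restriction $\hat S_{j(i)}|_{C'} \to C'$ over $C' \subset S'$ is a disjoint union of whichever irreducible components $\hat C_k$ of $p_2^{-1}(C')$ happen to lie in $\hat S_{j(i)}$; nothing in the \'etale decomposition alone prevents two or more of the $\hat C_k$ from lying in the same $\hat S_{j(i)}$, in which case your $e_i'$ would be strictly smaller than $f_{j(i)}'$. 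The statement you need---that each irreducible component of $p_2^{-1}(S')$ contains exactly one irreducible component of $p_2^{-1}(C')$, and its restriction to $C'$ is precisely that component---is the content of the paper's Lemma~\ref{lemma:C''S''}, and proving it is where most of the hypotheses are consumed: one passes to the fiber product $\ca A$ of $\ca A_a$ and $\ca A_{g,[r]}$ over $\ca A_{ng,\delta,[r]}$, uses the maximal-monodromy hypotheses~(2) via Lemma~\ref{irred-hecke} to show that the preimages of $C$ and $S$ in this \'etale cover of $\ca A_{g,[r]}$ decompose into matching irreducible pairs, and transports that matching along $\Phi|_S$ using the birationality from Proposition~\ref{birational} (where $r$ prime enters) together with the finiteness of $\Phi^{-1}(C')\setminus C$ from Proposition~\ref{birational}(3).

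A second, related issue: your argument for the $p_1$-side identity $p_1^{-1}(C^\#)\cap\hat S_{j(i)} = \hat C_i$ invokes hypothesis~(3) by producing a curve $D\neq C$ from ``the birational inverse of $\Phi|_S$,'' but that inverse is not defined on all of $S'$; the paper instead uses that $\Phi|_S$ is surjective (so any $D'\subset S'$ arising this way is $\Phi(D)$ for some $D\subset S$), and then the uniqueness assertion of Lemma~\ref{lemma:C''S''}(2) (that an inclusion $B_j'\subset R_i'$ forces $j=i$) to land back on $\hat C_i$. So both halves of your numerical identity $e_i'f_{j(i)} = e_i f_{j(i)}'$ in fact rest on Lemma~\ref{lemma:C''S''}, which you have not proved and whose conclusion you assumed without justification.
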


The rest of Section \ref{section:hecke} is devoted to the proof of Theorem \ref{theorem:degree}. 

\subsection{Étale covers of curves in surfaces with maximal monodromy} \label{section:etalemonodromy}
Let $g \geq 1, r \geq 3$ be integers.  
Let $\tilde  C \subset \tilde  S$ be a smooth curve in a smooth proper surface. Let $\tilde  S \to S$ be a proper birational morphism onto a surface $S \subset \ca A_{g,[r]}$, let $C \subset S$ be the image of $\tilde  C$, and assume $\tilde  C \to C$ is birational. Assume moreover that $\pi_1^{\textnormal{\'et}}(\tilde C) \to \pi_1^{\textnormal{\'et}}(\tilde S)$ and $\pi_1^{\textnormal{\'et}}(\tilde  S) \to \pi_1^{\textnormal{\'et}}(\Agr)$ are surjective.
    
\begin{lemma}\label{irred-hecke}
 Let $p \colon \ca A \to \mathcal A_{g,[r]}$ be a finite \'etale cover. Let $R = \sqcup_{i=1}^n R_i$ (resp.\ $B = \sqcup_{j=1}^m B_j$), with $B \subset R \subset \ca A$, be the decomposition of the inverse image of $ S$ (resp.\ $C$) into connected components. Then these connected components are irreducible, we have 
 $n=m$, and there is exists a permutation $\sigma \in \mathfrak S_n$ such that for each $i$, we have $R_{\sigma(i)}|_{ C} = B_i$ as étale covers of $C$.  
 \end{lemma}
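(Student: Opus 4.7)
The plan is to reinterpret the problem in terms of $\pi_1^\et$-sets, work it out cleanly upstairs on the smooth covers $\tilde S$ and $\tilde C$, and then transfer the conclusion down to $S$ and $C$ via the proper birational maps $\tilde S \to S$ and $\tilde C \to C$.

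First I would observe that the composition $\pi_1^\et(\tilde C) \to \pi_1^\et(\tilde S) \to \pi_1^\et(\Agr)$ is surjective, and that the factorizations $\tilde S \to S \to \Agr$ and $\tilde C \to C \to \Agr$ then force $\pi_1^\et(S) \to \pi_1^\et(\Agr)$ and $\pi_1^\et(C) \to \pi_1^\et(\Agr)$ to be surjective as well. Let $N$ be the number of connected components of $\ca A$, that is, the number of $\pi_1^\et(\Agr)$-orbits on a geometric fiber of $p$. The surjectivity of the four maps above implies that the $\pi_1^\et$-orbit decompositions over $\tilde S$, $S$, $\tilde C$, and $C$ all coincide with the one over $\Agr$, so each of $\tilde R := \tilde S \times_{\Agr} \ca A$, $R$, $\tilde B := \tilde C \times_{\Agr} \ca A$, and $B$ has exactly $N$ connected components. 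Moreover, the components of $\tilde R$ and $\tilde B$ are irreducible, since they are finite étale over the smooth irreducible varieties $\tilde S$ and $\tilde C$.

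Next I would transfer these conclusions to $R$ and $B$. Base change along the finite étale $R \to S$ shows that $\tilde R \to R$ is proper and birational, hence surjective; similarly for $\tilde B \to B$. Every connected component of $R$ has a nonempty clopen preimage in $\tilde R$, so $\pi_0(\tilde R) \to \pi_0(R)$ is a surjection between sets of cardinality $N$, and is therefore a bijection. It follows that each $R_i$ is the image of a unique irreducible component of $\tilde R$, and thus is irreducible. The same argument applied to $\tilde B \to B$ shows each $B_j$ is irreducible and that $n = m = N$. This step is the main obstacle: passing through the possibly singular and non-normal $S$ could a priori allow a component of $R$ to acquire several preimages in $\tilde R$, but the $\pi_1^\et$-hypothesis pins both $\pi_0$-cardinalities to $N$ and rules this out.

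Finally, the permutation is dictated by the canonical indexing of components by $\pi_1^\et(\Agr)$-orbits on the fiber of $p$. For each such orbit $O$, let $\ca A_O \subset \ca A$ be the corresponding component of $\ca A$; then $R_O := S \times_{\Agr} \ca A_O$ is a component of $R$, $B_O := C \times_{\Agr} \ca A_O$ is a component of $B$, and by construction $B_O = R_O \times_S C = R_O|_C$. Relabeling the indices so that $B_i$ corresponds to the orbit $O_i$ and setting $\sigma(i)$ to be the index of $R_{O_i}$ in the decomposition of $R$ produces a permutation $\sigma \in \mathfrak S_n$ with $R_{\sigma(i)}|_C = B_i$ as étale covers of $C$, as required.
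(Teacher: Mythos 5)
Your proof is correct and follows essentially the same route as the paper. Both arguments hinge on the observation that surjectivity of $\pi_1^\et(\tilde S) \to \pi_1^\et(\Agr)$ and $\pi_1^\et(\tilde C) \to \pi_1^\et(\Agr)$ makes the pull-back covers $\tilde R, \tilde B$ over the smooth $\tilde S, \tilde C$ connected (hence irreducible, being smooth), and that irreducibility then descends to $R, B$ along the surjective proper morphisms $\tilde R \to R, \tilde B \to B$. The paper streamlines this by reducing at the outset to $\ca A$ connected, after which one only needs the fact that the image of an irreducible space under a surjective map is irreducible; this sidesteps the $\pi_0$-counting via $\pi_1^\et(S) \to \pi_1^\et(\Agr)$ that you carry out. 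Your count is correct and yields the bijection $\pi_0(\tilde R) \xrightarrow{\sim} \pi_0(R)$, which serves the same purpose but less directly. Your final paragraph, identifying the permutation by indexing the components of $R$ and $B$ via the components $\ca A_O$ of $\ca A$, is exactly what the paper's reduction to the connected case gives implicitly.
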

 \begin{proof}
 To prove the lemma, we may assume that $\ca A$ is connected. It then suffices to show that the pull-backs of $\ca A$ to $S$ and $C$, which are étale covers $R \to S$ and $B \to C$, are irreducible. Pulling these covers further back to $\wt S$ and $\wt C$, we obtain étale covers $\wt R \to \wt S$ and $\wt B \to \wt C$. These are connected by the assumptions on monodromy, and hence irreducible because $\wt C$ and $\wt S$ are smooth. Thus $R$ and $B$ are irreducible. 
  \end{proof}

We continue with the above notation. Let $n \geq 1$ be an integer, and let $\alpha \in \rm{M}_n(\ZZ)$ be symmetric and positive definite matrix. Let $a \in \GSp_{2ng}(\QQ)_+$, and consider diagram \eqref{correspondence:specialcase}. Define
$$
C' = \Phi(C) \qquad \text{and} \qquad S' = \Phi(S). 
$$
Then $
 C' \subset S' \subset \ca A_{ng, \delta, [r]}$, and we define a one-dimensional (resp.\ two-dimensional) subscheme $B'$ (resp.\ $R'$) of ${\ca A}_a$ as follows:
\begin{align}
\label{align:C'tildeC'S}
\begin{split}
B' \subset R' \subset {\ca A}_a \qquad B' = p_2^{-1}(C'), \qquad  R' = p_2^{-1}(S').
\end{split}
\end{align}

\begin{lemma} \label{lemma:C''S''}
The following assertions are true. 
\begin{enumerate}
\item 
The varieties $B'$ and $R'$ have the same number of irreducible components. 
\item Each irreducible component $B_i' \subset B'$ of the curve $B'$ is contained in a unique irreducible component $R_i' \subset R'$ of the surface $R'$.  
\item We have $B_i' = R_i'|_{C'}$ as étale covers of $C'$.  
\end{enumerate}
\end{lemma}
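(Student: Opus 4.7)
The plan is to pull back the finite étale cover $p_2$ along $\Phi$ and apply Lemma \ref{irred-hecke}. I form the fiber product $\ca B := \ca A_a \times_{\ca A_{ng,\delta,[r]}} \ca A_{g,[r]}$, with projection $q \colon \ca B \to \ca A_{g,[r]}$, which is finite étale as a base change of $p_2$. Since $S \hookrightarrow \ca A_{ng,\delta,[r]}$ factors through $\Phi|_S \colon S \twoheadrightarrow S'$ and similarly for $C$, the restrictions are
\[
q^{-1}(S) = R' \times_{S'} S, \qquad q^{-1}(C) = B' \times_{C'} C.
\]
Applying Lemma \ref{irred-hecke} to $q$ and the pair $C \subset S$ (maximal monodromy for $\tilde C$ in $\Agr$ follows by composing the two given surjections), one obtains decompositions $q^{-1}(S) = \sqcup_{i=1}^n R_i$ and $q^{-1}(C) = \sqcup_{j=1}^m B_j$ into irreducible connected components, together with $n=m$ and a permutation $\sigma$ satisfying $R_{\sigma(i)}|_C = B_i$ as étale covers of $C$.

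Next I apply Proposition \ref{birational}(2) —applicable since maximal monodromy and Lemma \ref{lemma:endo-mono} force $\End(\ca X_{\bar\eta}) = \ZZ$ at the generic points of $S$ and $C$— to conclude that $\Phi|_S$ and $\Phi|_C$ are birational, so $k(S) = k(S')$ and $k(C) = k(C')$. For any irreducible component $R_k' \subset R'$, the finite étale morphism $R_k' \to S'$ has generic fiber $\Spec L_k$ with $L_k/k(S')$ finite separable; base change to $S$ yields generic fiber $\Spec(L_k \otimes_{k(S')} k(S)) = \Spec L_k$, a single point, so $R_k' \times_{S'} S$ is irreducible. Hence the irreducible components of $R = R' \times_{S'} S$ are precisely $\{R_k' \times_{S'} S\}_k$, in bijection with those of $R'$. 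Relabeling, $R_i = R_i' \times_{S'} S$ and analogously $B_j = B_j' \times_{C'} C$; this proves item (1). Moreover, since the $R_i$'s are disjoint connected components of $R$, the surjectivity of $R \to R'$ (base change of $S \to S'$) forces the $R_i'$'s to be pairwise disjoint in $R'$.

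For items (2) and (3), a direct computation gives
\[
R_{\sigma(i)}|_C = (R_{\sigma(i)}' \times_{S'} S) \times_S C = R_{\sigma(i)}' \times_{S'} C = R_{\sigma(i)}'|_{C'} \times_{C'} C,
\]
which, combined with $B_i = B_i' \times_{C'} C = R_{\sigma(i)}|_C$, yields $B_i' \times_{C'} C = R_{\sigma(i)}'|_{C'} \times_{C'} C$ as closed subsets of $B$. Both $B_i'$ and $R_{\sigma(i)}'|_{C'}$ are closed irreducible subsets of $B'$ (irreducibility of $R_{\sigma(i)}'|_{C'}$ follows from that of its pullback $B_i$ along the surjective $C \to C'$), and each equals the image of the common pullback $B_i$ under the surjection $B = B' \times_{C'} C \to B'$; hence they coincide. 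Uniqueness in item (2) follows from the disjointness of the irreducible components of $R'$ established above. The main technical obstacle is the identification of irreducible components of the fiber product $R' \times_{S'} S$ with those of $R'$ when one only has birationality of $\Phi|_S$; the key mechanism is that étale covers are controlled by their generic fibers, which are preserved under base change along maps inducing an isomorphism of function fields.
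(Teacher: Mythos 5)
Your overall strategy (form the fiber product $\ca B$, apply Lemma~\ref{irred-hecke}, then transport the decomposition across the birational $\Phi|_S$) matches the paper's, but the key intermediate claim is not justified and is in fact false in general.

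You assert that for an irreducible component $R_k' \subset R'$, the morphism $R_k' \to S'$ is finite \emph{étale}, and deduce from the single-point generic fiber that $R_k' \times_{S'} S$ is irreducible. The first assertion does not follow from $R' \to S'$ being finite étale: $R_k'$ is an \emph{irreducible} component, not necessarily a connected component, and when $S'$ is non-normal (which can happen here, since $S' = \Phi(S)$ is only the image of $S$ under a finite birational map), the étale cover $R'$ is also non-normal, and its irreducible components need not be open in $R'$, hence need not be flat (let alone étale) over $S'$. Without flatness of $R_k' \to S'$, the base change $R_k' \times_{S'} S \to S$ need not be flat either, and the implication ``generic fiber is a single point $\Rightarrow$ total space is irreducible'' fails: $R_k' \times_{S'} S$ can pick up extra lower-dimensional irreducible components supported over the locus where $S \to S'$ fails to be an isomorphism. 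A concrete model of the phenomenon: let $S'$ be a nodal curve, $S \to S'$ its normalization, and $R' \to S'$ a connected degree-two étale cover whose two irreducible components each map birationally onto $S'$; then $R_k' \times_{S'} S$ contains isolated points over the node in addition to the expected component, so it is reducible, even though $R = R' \times_{S'} S$ does have exactly two irreducible components. The conclusion you want (a bijection between components of $R$ and of $R'$) is still true, but not via the chain $R_i = R_i' \times_{S'} S$.

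The paper avoids this pitfall by invoking the stronger part of Proposition~\ref{birational}(2): there is a dense open $U' \subset S'$ with preimage $U \subset S$ such that $U \to U'$ is an isomorphism. Base-changing the cartesian square by $U'$ gives an isomorphism $R_U \cong R'_{U'}$; one then observes that $R'_{U'}$ is a \emph{dense} open of $R'$ because the irreducible components of $R'$ dominate $S'$ — and this last fact uses flatness of the \emph{full} cover $R' \to S'$, not flatness of the individual components, which is why it goes through. Reading off irreducible components of $R'$ from a dense open is unproblematic, whereas reading them off from a fiber product against a non-flat morphism is not. To repair your argument, replace the function-field/generic-fiber step with this dense-open comparison.
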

\begin{proof}
Let ${\ca A}$ be the fiber product of ${\ca A}_a$ and $\ca A_{g,[r]}$ over $\ca A_{ng, \delta, [r]}$. Denote by $q_2 \colon \ca A \to \Agr$ the finite étale cover of $\Agr$ induced by $p_2 \colon \ca A_a \to \ca A_{ng, \delta, [r]}$. 
We let $B \subset R \subset \ca A$ be the inverse image of $C \subset S \subset \Agr$ in $\ca A$. This gives finite étale covers
\begin{align*}
B \to C, \qquad R \to S.
\end{align*}
By Lemma \ref{irred-hecke}, we have decompositions  into connected components
\begin{align} \label{star}
B = \sqcup_{i = 1}^m B_i, \quad R = \sqcup_{i = 1}^m R_i, \quad \text{with} \quad  B_i \subset R_i \quad  (i \in \set{1, \dotsc, m}),
\end{align}
and the schemes $B_i$ and $R_i$ are irreducible for each $i$. 

Remark that in the following diagram, each square is cartesian:
\begin{align}\label{pullback-etale-Ag}
\begin{split}
\xymatrix{
 & {\ca A} \ar[dd]^{\smash{\lower15pt\hbox{$\scriptstyle q_2$}}} \ar[dl]_-{\Psi} && R \ar[dl] \ar@{_{(}->}[ll]  \ar[dd] && B \ar[dl]\ar[dd] \ar@{_{(}->}[ll]  \\
{\ca A}_a  \ar[dd]^-{p_2} &&R' \ar@{_{(}->}[ll] \ar[dd]&&B' \ar@{_{(}->}[ll] \ar[dd]& \\
 &\ca A_{g,[r]} \ar[dl]_-{\Phi}&   &S \ar@{_{(}->}[ll] \ar[dl] &  &C \ar[dl]\ar@{_{(}->}[ll]& \\
\ca A_{ng, \delta, [r]} &&S' \ar@{_{(}->}[ll]&&C'.  \ar@{_{(}->}[ll]&
}
\end{split}
\end{align}
As $\pi_1^{\textnormal{\'et}}(\tilde C) \to \pi_1^{\textnormal{\'et}}(\tilde S)$ and $\pi_1^{\textnormal{\'et}}(\tilde S) \to \pi_1^{\textnormal{\'et}}(\Agr)$ are surjective, the curve $C$ and the surface $S$ have maximal monodromy. Consequently, by 
Proposition \ref{birational} and Lemma \ref{lemma:endo-mono}, the morphisms $C \to C'$ and $S \to S'$ are birational, and are in fact an isomorphism over a dense open subset of the target. Let $U' \subset S'$ be a dense open subset with preimage $U = (\Phi|_S)^{-1}(U')$, such that $U \to U'$ is an isomorphism.  Let $R'_{U'} \subset R'$ and $R_U \subset R$ denote the respective inverse images in $R'$ and $R$. As the squares in \eqref{pullback-etale-Ag} are cartesian, the square
\[
\xymatrix{
R_U \ar[d] \ar[r] & R'_{U'} \ar[d] \\
U \ar[r] & U'
}
\]
is cartesian. Since $U \to U'$ is an isomorphism, the map  \begin{align} \label{iso:wtU}R_U \xrightarrow{\sim} R'_{U'}\end{align}
is an isomorphism. Moreover, as $R \to S$ is finite étale, 
the components of $R$ dominate $S$. Hence $R_U \subset R$ is dense open, so that by \eqref{star}, we have
$$
R_U = (R_U)_1 \sqcup \cdots \sqcup (R_U)_m, 
$$
with $(R_U)_i \subset R_i$ a dense open subset for each $i$. In view of \eqref{iso:wtU}, this yields a decomposition
\begin{align}\label{decomposition:wtU'}
R'_{U'} = (R'_{U'})_1 \sqcup \cdots \sqcup (R'_{U'})_m
\end{align}
of $R'_{U'}$ into connected components, and the $(R'_{U'})_i$ are irreducible.  
As $U' \subset S'$ is dense open and the irreducible components of $R'$ dominate $S'$, the inverse image $ R'_{U'} \subset R'$ of $U' \subset S'$ is dense open.  Thus, \eqref{decomposition:wtU'} implies that the decomposition of $R'$ into its irreducible components is of the form
$
R' = R_1' \cup \cdots \cup R_m'$, $R_i' = \Psi(R_i)$, with $R_i' \neq R_j'$ for all $ i \neq j$. 
An analogous argument shows that $B'$ composes into irreducible components as follows:
\begin{align} \label{wtC'-decomposition}
B' = B_1' \cup \cdots \cup B_m', \quad \quad B_i' = \Psi(B_i), \quad \quad B_i' \neq B_j' \quad \forall i \neq j. 
\end{align}
This shows in particular that $B'$ and $R'$ have the same number of irreducible components  $B_i'$ and $R_i'$, and that $B_i' \subset R_i'$ for each $i$. 

Assume $B_i' \subset R_j'$ for some $i,j \in \set{1, \dotsc, m}$. We claim that $i = j$. To prove this, consider the morphism $\Psi|_{R_i} \colon R_i \to R_i'$, and define
$
E \subset (\Psi|_{R_i})^{-1}(B_j')
$
as a one-dimensional irreducible component of $(\Psi|_{R_i})^{-1}(B_j')$. We have $$\Phi({q_2}(E)) = p_2(\Psi(E)) = p_2(B_j') = C',$$ hence ${q_2}(E) \subset \Phi^{-1}(C')$. As $\Phi^{-1}(C') = C \cup \set{x_1, \dotsc, x_k}$ for some closed points $x_i \in \Agr$ by 
item \eqref{item:moduli:third} of Proposition \ref{birational}, and as $q_2$ is finite, 
it follows that ${q_2}(E) = C$. Thus, $E$ is contained in ${q_2}^{-1}(C) = B$, hence $
E \subset B \cap R_i = B_i,
$
which implies $E = B_i$. As $E \subset (\Psi|_{R_i})^{-1}(B_j')$, we have $\Psi(E) = B_j'$, and we conclude that $B_i' = \Psi(B_i) = \Psi(E) = B_j'$. Hence $i=j$.

Finally, let $D \subset  R_i'|_{C'}$ be an arbitrary irreducible component of $R_i'|_{C'}$. Then $D \subset p_2^{-1}(C') = B'$. By \eqref{wtC'-decomposition}, this implies $D = B_j'$ for some $j$. As $D \subset R_i'$, we get $B_j' \subset R_i'$, so that $i=j$ by the above, and hence $D = B_i'$. 
\end{proof}

\subsection{Proof of Theorem \ref{theorem:degree}}
 
The goal of this subsection is to complete the proof of Theorem \ref{theorem:degree}. 
We continue with the notation of Section \ref{section:etalemonodromy}. 
Consider diagram \eqref{correspondence:specialcase}. 
As a consequence of Lemma \ref{lemma:C''S''}, we can write
\begin{align}\label{decomp}
R' = R'_1 \cup \cdots \cup R'_m, \quad \quad B' = B'_1 \cup \cdots \cup B_m',
\end{align}
with $B_i' \subset R_i'$ for each $i \in \set{1, \dotsc, m}$, and $B_i' \neq B_j'$ and $R_i' \neq R_j'$ for $i\neq j$. For each $i$, define a curve in a surface $C_i'' \subset S_i'' \subset \ca A_{ng, [r]}$ as follows:
\begin{align*}
C_i'' = p_1(B_i')\subset  p_1(R_i') = S''_i.  
\end{align*}
Note that, by definition of $\tau_a$ (see Definition \ref{tauK}), we have by \eqref{decomp} that
\[
\tau_a(C') = C''_1 \cup \cdots \cup C''_m, \quad \quad \tau_a(S') = S''_1 \cup \cdots \cup S_m'''.
\]
The varieties $C_i''$ and $S_i''$ are irreducible for each $i$, but we do not claim that the curves $C_i''$ (resp.\ the surfaces $S_i''$) are pairwise distinct.

Theorem \ref{theorem:degree} is a direct consequence of the following proposition. 
\begin{proposition}\label{ratiodegree}
Let $i \in \set{1, \dotsc, m}$. 
    Assume there is no irreducible curve $D \subset S$ with $D\neq C$ and $C_i'' \subset \tau_a(D')$, where $D' = \Phi(D)$. Then, we have 
    \begin{align} \label{degCdegS}
    \frac{\deg(C_i'')}{\deg(S_i'')} = \frac{\deg(C')}{\deg(S')} = \frac{\deg(C)}{\deg(S)}.
    \end{align}
\end{proposition}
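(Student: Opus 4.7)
The plan has two parts: first establish $\deg(C''_i)/\deg(S''_i) = \deg(C')/\deg(S')$ via the Hecke correspondence \eqref{correspondence:specialcase}, and then relate this to $\deg(C)/\deg(S)$ via the finite map $\Phi$.

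For the first part, set $L := p_1^* L_{ng} = p_2^* L_{ng,\delta}$ on $\ca A_a$; these pullbacks agree by Lemma \ref{quadrilateral}. The projection formula applied to the finite étale maps $p_1|_{R'_i}\colon R'_i \to S''_i$ and $p_2|_{R'_i}\colon R'_i \to S'$ yields
\[
\deg(p_1|_{R'_i}) \cdot \deg(S''_i) \;=\; (L|_{R'_i})^2 \;=\; \deg(p_2|_{R'_i}) \cdot \deg(S'),
\]
and analogously with $B'_i$ in place of $R'_i$. Lemma \ref{lemma:C''S''}(3) asserts $B'_i = R'_i|_{C'}$ as étale covers of $C'$, which gives $\deg(p_2|_{B'_i}) = \deg(p_2|_{R'_i})$. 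The remaining and more delicate identity $\deg(p_1|_{B'_i}) = \deg(p_1|_{R'_i})$ is equivalent to $B'_i = (p_1|_{R'_i})^{-1}(C''_i)$; this is where the hypothesis on $D$ enters. If an irreducible curve $E \subset R'_i$ with $E \neq B'_i$ satisfied $p_1(E) = C''_i$, then $p_2(E) \subset S'$ is a curve of the form $\Phi(D)$ for some curve $D \subset S$ (using birationality of $\Phi|_S$). The case $D = C$ forces $E \subset p_2^{-1}(C') \cap R'_i = B'_i$ by Lemma \ref{lemma:C''S''}, contradicting $E \neq B'_i$; and $D \neq C$ contradicts the hypothesis because $C''_i = p_1(E) \subset p_1(p_2^{-1}(\Phi(D))) = \tau_a(D')$. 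Chaining the four projection-formula identities then gives the first equality.

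For the second equality, the assumed surjectivity of $\pi_1^{\textnormal{\'et}}(\tilde C) \to \pi_1^{\textnormal{\'et}}(\Agr)$ and $\pi_1^{\textnormal{\'et}}(\tilde S) \to \pi_1^{\textnormal{\'et}}(\Agr)$ gives $C$ and $S$ maximal monodromy, so Lemma \ref{lemma:endo-mono} yields $\End(\ca X_{\bar \eta}) = \Z$ at the respective generic points. Proposition \ref{birational}(2) then shows $\Phi|_C$ and $\Phi|_S$ are birational. Combining Lemma \ref{lemma:hodgebundle}(1) with the canonical identification $f^n_* \Omega_{\ca X^n/\Agr} \simeq (f_* \Omega_{\ca X/\Agr})^{\oplus n}$ gives $\Phi^* L_{ng,\delta} \simeq L_g^{\otimes n}$. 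Applying the projection formula to $\Phi|_C$ and $\Phi|_S$ then expresses $\deg(C')$ and $\deg(S')$ in terms of $\deg(C)$ and $\deg(S)$, and the stated equality of ratios follows.

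The main obstacle is the scheme-theoretic identification $B'_i = (p_1|_{R'_i})^{-1}(C''_i)$: the hypothesis on Hecke translates of other curves is the precise non-trivial input ruling out spurious curves $E \subset R'_i$ mapping to $C''_i$ under $p_1$; the rest of the argument is bookkeeping with the projection formula.
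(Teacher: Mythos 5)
Your proof is correct and takes essentially the same approach as the paper: you establish the scheme-theoretic identity $B_i' = (p_1|_{R_i'})^{-1}(C_i'')$ using the hypothesis on Hecke translates of other curves $D$ (handling the cases $D=C$ and $D\neq C$ exactly as the paper does), and then chain the projection-formula identities for $p_1, p_2$ restricted to $R_i'$ and $B_i'$, together with the birationality of $\Phi|_C$ and $\Phi|_S$ and the relation $\Phi^\ast L_{ng,\delta}\simeq L_g^{\otimes n}$. The paper packages the bookkeeping slightly differently (via intermediate degrees $d_i$, $e_i$, and proving the second equality first), but the argument is the same.
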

\begin{proof}
Denote the pull-backs of the Hodge line bundles (cf.\ Definition \ref{def:hodgedegree}) to the surfaces $S$ and $S'$ by $M  = L_g|_S \in \Pic(S)$ and $ M'  = L_{ng, \delta}|_{S'} \in \Pic(S')$. Consider the morphism $\Phi|_S \colon S \to S'$. By item \eqref{flatbasechange:1} of Lemma \ref{lemma:hodgebundle}, we have $\Phi^\ast(L_{ng, \delta}) = L_g^{\otimes n} \in \Pic(\ca A_{g, [r]})$. Therefore, $(\Phi|_S)^\ast(M') = M^{\otimes n} \in \Pic(S).$
The projection formula (cf.\ \cite[Proposition 1.10]{debarre}) applied to  $\Phi|_S \colon S \to S'$ thus gives
$
n^2 \cdot \deg(S) = 
\left(\Phi|_S^\ast(M') \cdot \Phi|_S^\ast(M') \right)  
=
\left(M' \cdot M' \right) = \deg(S'),
$
where we used the fact that $\Phi|_S \colon S \to S'$ is birational (cf.\ Proposition \ref{birational} and Lemma \ref{lemma:endo-mono}) in the second equality. For analogous reasons, we have 
$
n^2 \cdot \deg(C) = \deg(C').
$
The second equality in \eqref{degCdegS} follows then by combining the two equalities $n^2 \cdot \deg(S) = \deg(S')$ and $n^2 \cdot \deg(C) = \deg(C')$. 

It remains to prove the first equality in \eqref{degCdegS}. Define 
\begin{align*}
    d_i = \deg(R_i' \to S'), \quad \quad e_i = \deg(R_i' \to S_i'').
\end{align*}
By Lemma \ref{lemma:C''S''}, we have that $B_i' = R_i' \ \times_{S'} C'$, hence 
\begin{align} \label{d_i}
 d_i = \deg(B_i' \to C'). 
\end{align}
We claim that $B_i'$ is equal to the fiber product $R_i'\times_{S_i''} C_i'' $. To see this, observe first that $B_i'$ is contained in and therefore an irreducible component of this fiber product. Letting $E'$ denote any irreducible component of this fiber product, we need to prove $E = B_i'$. Define $D' = p_2(E')$. As $E' \subset R_i'$, we have
$D' = p_2(E') \subset p_2(R_i') = S' = \Phi(S)$. 
Therefore, there exists a curve $D \subset S$ such that $\Phi(D) = D'$. If $C \neq D$, then there is an irreducible curve $D \subset S$ with $D \neq C$ and $C_i'' \subset \tau_a(D')$, 
which contradicts our hypothesis. Hence $C = D$, so that $E' \subset B'$, hence $E' = B_j'$ for some $j$. Consequently, $B_j' = E' \subset R_i'$, hence $i=j$ by the second item of Lemma \ref{lemma:C''S''}, so that $E = B_i'$. 

Consequently, 
$
B_i' = R_i'\times_{S_i''} C_i'' $, and therefore \begin{align}\label{e_i}e_i = \deg(B_i' \to C_i'').\end{align}
By Lemma \ref{quadrilateral} and the projection formula as in \cite[Proposition 1.10]{debarre}, we have
\begin{align} \label{fewequalities:S}
\begin{split}
d_i \cdot \deg(S') = \deg(R_i') = 
 e_i \cdot \deg(S_i''), 
\end{split}
\\
\begin{split}
\label{fewequalities:C}
\deg(B_i' \to C') \cdot \deg(C') = \deg(B_i') = \deg(B_i' \to C_i'') \cdot \deg(C_i''). 
\end{split}
\end{align}
Together, \eqref{d_i}, \eqref{e_i} and \eqref{fewequalities:C} imply:
\begin{align} \label{equality:final}
 d_i \cdot \deg(C') = 
\deg(B_i') = e_i\cdot \deg(C_i''). 
\end{align}
From \eqref{fewequalities:S} and \eqref{equality:final}, it follows that
\[
\frac{\deg(S_i'')}{\deg(S')} = \frac{d_i}{e_i} = \frac{\deg(C_i'')}{\deg(C')}.
\]
This proves the first equality in \eqref{degCdegS}, and we are done. 
\end{proof}

\section{Hecke translates of a curve in a surface avoiding that surface}

In this section, we find a curve in a surface whose non-trivial Hecke translates avoid that surface, and deduce consequences thereof. Together with the main result of the previous section (cf.\ Theorem \ref{theorem:degree}), this analysis will allow us to prove Theorem \ref{theorem:generic} in the next section. 

\begin{proposition}\label{proposition:avoidsurface}
Let $g \geq 2$, $r \geq 3$. Let $S\subset \Agr$ be a proper surface defined over $\smash{\Qbar}$ and $\pi \colon \tilde S \to S$ a projective birational morphism with $\tilde S$ smooth, $\tilde S$ and $\pi$ defined over $\Qbar$, 
such that $\pi_1^{\textnormal{\'et}}(\tilde S) \to \pi_1^{\textnormal{\'et}}(\Agr)$ is surjective. Then for any $d \geq 1$, there is a smooth irreducible curve $\tilde C \subset \tilde S$ defined over $\Qbar$ 
with image $C \subset S$, such that $\tilde C \to C$ is birational,  $\pi_1^{\et}(\tilde C) \to \pi_1^\et(\tilde S)$ surjective, 
$\deg(C) \geq d$, and  $\tau(C) \not\subset S$ for every non-trivial Hecke correspondence $\tau$. 
\end{proposition}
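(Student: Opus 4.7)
My plan is to realize $\tilde C$ as a general smooth irreducible member of the linear system $|mH|$ on $\tilde S$, where $H$ is an ample line bundle on $\tilde S$ defined over $\Qbar$ and $m \gg 0$. By Bertini, there is a dense Zariski open $V_m \subset |mH|$ parametrizing smooth irreducible divisors; by the Lefschetz hyperplane theorem for étale fundamental groups in characteristic zero (SGA~2), a non-empty open $V_m' \subset V_m$ satisfies $\pi_1^{\et}(\tilde C) \twoheadrightarrow \pi_1^{\et}(\tilde S)$, and composing with the hypothesized $\pi_1^{\et}(\tilde S) \twoheadrightarrow \pi_1^{\et}(\Agr)$ yields maximal monodromy of $\tilde C$. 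The condition that $\tilde C$ avoids the finitely many $\pi$-exceptional curves is open and automatic for $m$ large, giving birationality $\tilde C \to C$.

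For the degree, $\deg(C) = (\pi^\ast L_g \cdot \tilde C)_{\tilde S} = m \cdot (\pi^\ast L_g \cdot H)_{\tilde S}$. Since $L_g$ is ample on the Satake compactification of $\Agr$ and $S \subset \Agr$ is proper, $L_g|_S$ is ample, so $\pi^\ast L_g \cdot H > 0$ and $\deg(C) \geq d$ for $m$ sufficiently large.

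The Hecke condition is the key technical step. The non-trivial Hecke correspondences are parametrized by $a \in \GSp_{2g}(\Q)_+/\Gamma(r)$, a countable set. For each non-trivial $\tau_a$, I first claim $\tau_a$ does not stabilize $S$ set-wise. If $\tau_a(S) = S$, then $a$ preserves the preimage $\tilde S_\infty \subset \bb H_g$ of $S$ in the Siegel upper half-space; a commensurator-type argument using the hypothesis that $\pi_1^{\et}(\tilde S)$ surjects onto $\Gamma(r) = \pi_1^{\et}(\Agr)$ then forces $a$ to lie in $\Gamma(r) \cdot Z(\GSp_{2g}(\Q))$, contradicting non-triviality. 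Granting this, $Y_a := \tau_a^{-1}(S) \cap S$ has dimension at most $1$, so $\pi^{-1}(Y_a) \subset \tilde S$ is at most a curve; the condition $\tau_a(C) \subset S$ then forces $\tilde C$ to coincide with one of the finitely many irreducible components of $\pi^{-1}(Y_a)$. Ranging over $a$ yields a countable collection $\ca D$ of ``bad'' irreducible curves in $\tilde S$, each contributing at most one $\Qbar$-point to $|mH|$.

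To conclude, we extract $\tilde C \in V_m'(\Qbar)$ outside the countable set $B_m \subset |mH|(\Qbar)$ formed by these bad curves. For $m \gg 0$, $\dim |mH|$ is large by Riemann--Roch, and $V_m'$ is a Zariski-dense open of $|mH|$; since $V_m'(\Qbar)$ is infinite and $B_m$ consists of isolated $\Qbar$-points, a $\Qbar$-point of $V_m' \setminus B_m$ exists, which may be exhibited by restricting to a suitably chosen $\Qbar$-rational pencil in $|mH|$ meeting $B_m$ in only finitely many points. The main obstacle in the proof is justifying the absence of non-trivial Hecke stabilizers of $S$ from the maximal-monodromy hypothesis, which reduces to an arithmetic analysis of the stabilizer of $\tilde S_\infty$ inside $\GSp_{2g}(\Q)_+$; the degree and monodromy steps, as well as the final $\Qbar$-existence step, are comparatively routine.
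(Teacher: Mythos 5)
Your general architecture (Bertini $+$ Lefschetz for monodromy, blow-down to ensure birationality, ample degree estimate) matches the paper, but the way you handle the Hecke-avoidance condition diverges from the paper's, and it has genuine gaps that the paper's approach is specifically designed to sidestep.

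The paper reduces the Hecke condition to a \emph{finite} avoidance problem by first fixing a single point $x \in S(\Qbar)$ and invoking equidistribution of the Hecke orbit $\tau(x)$: as $\deg \tau \to \infty$, $\tau(x)$ equidistributes, so $\tau(x) \subset S$ only for a finite set $I$ of non-trivial $\tau$. It then, again by equidistribution applied to $S$ itself (which shows $\tau(S)\not\subset S$ and hence $S \setminus \tau_i^\vee(S) \neq \emptyset$), chooses for each $\tau_i \in I$ a point $x_i \in S$ with $\tau_i(x_i) \cap S = \emptyset$. Requiring $C$ to pass through $x$ and all the $x_i$ kills every non-trivial Hecke correspondence at once. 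The key trick is pinning $C$ through a fixed base point so that the ``bad'' $\tau$'s are identified in advance and are finitely many. You miss this idea, and try instead to classify, for each of the countably many $\tau$, the finitely many bad curves inside $\tilde S$, and then avoid the whole countable collection.

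That route has two gaps. First, your claim that ``$\tau_a(S)=S$ forces $a$ trivial, via a commensurator-type argument'' is asserted but not justified. What is actually needed for the bad locus $Y_a=\tau_a^\vee(S)\cap S$ to be a curve is $S \not\subset \tau_a^\vee(S)$, i.e.\ that $S$ is not among the irreducible components of its own Hecke translate. Deducing this from the monodromy hypothesis would require analysing the stabilizer of the preimage $\tilde S_\infty \subset \bb H_g$ inside $\GSp_{2g}(\Q)_+$, and the inclusion of that stabilizer into $\Gamma(r)\cdot Z$ is neither routine nor obviously true; the paper simply bypasses this by using equidistribution of $\tau(S)$. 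Second and more seriously, even granting all that, your bad set $B_m \subset |mH|(\Qbar)$ is only known to be countable. Since $\Qbar$ is countable, $|mH|(\Qbar)$ is itself countable, so ``$B_m$ countable and $V_m'(\Qbar)$ infinite'' does not produce a $\Qbar$-point of $V_m' \setminus B_m$; your suggested pencil trick has the same issue, as the pencil's $\Qbar$-points are again countable and nothing a priori prevents them from all being bad. Over $\CC$ this would be fine (Baire category / measure zero), but the whole point of the statement is to produce a curve over $\Qbar$, and the paper's reduction to a finite set of constraints is precisely what makes the $\Qbar$-rationality of $\tilde C$ come for free from Bertini.
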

\begin{proof}
The morphism $\pi$ is the blow-up $\pi \colon \tilde S = \rm{Bl}_Z(S) \to S$ of $S$ in a closed subscheme $Z \subset S$. 
Let $L \in \Pic(S)$ be the pull-back of the Hodge line bundle. Then $L$ is ample, and for $m\gg 0$, the line bundle $L' \coloneqq \pi^\ast(L^{\otimes m}) \otimes \OO_{\tilde S}(-E)$ on $\tilde S$ is ample, where $E \subset \tilde S$ denotes the exceptional divisor.

Let $x\in S(K)$ be any point. As $\tau(x)$ equidistributes in $\Agr$ for any sequence of Hecke correspondences $\tau$ with growing degree, it follows that $\tau(x) \subset S$ for at most finitely many Hecke correspondences $\tau$. Let $I$ denote the finite set of non-trivial Hecke correspondences $\tau_i$ that satisfy $\tau_i(x) \subset S$. 

For each $\tau_i \in I$, we will now find a point $x_i \in S$ such that $\tau_i(x_i) \cap S = \emptyset$. Let $\tau_i^\vee$ denote the Hecke correspondence dual to $\tau_i$. Suppose a point $y \in S$ has the property that there is a point $z\in S \cap \tau_i(y)$. Then, $y\in \tau_i^\vee(z) $, and so $y\in S\cap \tau^\vee_i(S)$. Therefore, any point $x_i \in S \setminus \tau_i^\vee(S)$ would satisfy $\tau_i(x) \cap S = \emptyset$. We now observe that $ \tau(S) \not\subset S$ for every non-trivial Hecke correspondence $\tau$ (again by equidistribution), and therefore the set $S \setminus \tau_i^\vee(S)$ is non-empty, giving us the required $x_i$.

By Bertini's theorem, we can find a smooth irreducible curve $\tilde C \subset \tilde S$ defined over $\Qbar$, 
such that:
\begin{enumerate}[label=(\roman*)]
    \item \label{ii} We have $(L' \cdot \tilde C) \geq md$. 
    \item \label{iii} The curve $\tilde C$ is not contained in the exceptional divisor $E \subset \tilde S$. 
        \item \label{iv}The curve $C \coloneqq \pi(\tilde C) \subset S$ contains the set $\{x\} \cup \{x_i\}_{i\in I} \in S(K)$.
\end{enumerate}
By \ref{iii}, the induced map $\rho \colon \tilde C \to C$ is birational, so that 
\begin{align*}
    \left(L' \cdot \tilde C \right) &= 
    \left( 
 \left(   \pi^\ast(L^{\otimes m}) \otimes \OO_{\tilde S}(-E) \right) \cdot \tilde C 
    \right) \\ 
 & = m\deg_{\tilde C}\left( \rho^\ast(L|_C) \right) - ( E \cdot \tilde C) \\
 &  = m\deg_C\left(L|_C\right) - \left( E \cdot \tilde C \right) \\
    & \leq m \left( L \cdot C \right).
\end{align*}
Here, we used that $(E \cdot \tilde C) \geq 0$ (see \ref{iii}) in the inequality on the last line. In view of \ref{ii}, it follows that $m( L \cdot C ) \geq (L' \cdot \tilde C) \geq md$, hence $(L \cdot C) \geq d$. 
Consequently, $ \deg(C) =(L \cdot C) \geq d$. 
By the Lefschetz hyperplane theorem, the map $\pi_1^{\et}(\tilde C) \to \pi_1^\et(\tilde S)$ is surjective. As $C$ contains $x$, we have $\tau(C) \not\subset S$ for every $\tau \neq \tau_i$, $i\in I$. As $C$ contains $x_i$, we have $\tau_i(C) \not \subset S$ for $\tau_i$. 
\end{proof}

\begin{corollary}\label{singlecurve-upgraded}
Let $g,n \geq 1, r \geq 3$. 
Let $\alpha \in \rm{M}_n(\ZZ)$ be symmetric and positive definite, and let $a \in \GSp_{2ng}(\QQ)_+$. Let $C\subset S \subset \Agr$ be as in Proposition \ref{proposition:avoidsurface}. Then there is no irreducible curve $D \neq C$ with $D\subset S$, such that $\tau_a(\Phi(C))$ and $ \tau_a(\Phi(D)) \subset \ca A_{ng, [r]}$ share an irreducible component. 
\end{corollary}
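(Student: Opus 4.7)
The plan is to argue by contradiction. Supposing such a $D$ exists, I will extract from the component-sharing condition a non-trivial Hecke correspondence $\tau$ on $\Agr$ having $D$ as an irreducible component of $\tau(C)$, and then contradict the avoidance property of $C$ guaranteed by Proposition \ref{proposition:avoidsurface}.

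First, letting $Y \subset \ca A_{ng,[r]}$ denote the shared irreducible component, I would note that $Y \subset \tau_a(\Phi(C)) \cap \tau_a(\Phi(D))$, so $\Phi(C)$ and $\Phi(D)$ both lie in $\tau_a^\vee(Y)$, and hence $\Phi(D) \subset (\tau_a^\vee \circ \tau_a)(\Phi(C))$. The composed correspondence $\tau_a^\vee \circ \tau_a$ on $\ca A_{ng,\delta,[r]}$ decomposes into a finite sum $\sum_j \tau_{b_j}$ of Hecke correspondences, so $\Phi(D) \subset \tau_{b_j}(\Phi(C))$ for some index $j$. Since $S$ has maximal monodromy, Lemma \ref{lemma:endo-mono} combined with Proposition \ref{birational} shows $\Phi(C) \neq \Phi(D)$, whence this $\tau_{b_j}$ must be non-trivial.

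The next step is to descend to $\Agr$. For a generic $c \in C$ there exists $d \in D$ with $\Phi(d) \in \tau_{b_j}(\Phi(c))$, which moduli-theoretically produces an isogeny $c^n \to d^n$ of bounded and specific type respecting the polarization $\lambda_\alpha$. Since $\End(c_{\bar\eta}) = \Z$ by Lemma \ref{lemma:endo-mono}, Lemma \ref{lemma:cancelling} allows me to cancel the $n$-th powers and obtain a primitive isogeny $c \to d$ of bounded degree. Varying $c$ along $C$ yields $D \subset \tau(C)$ for a non-trivial Hecke correspondence $\tau$ on $\Agr$ lying in a finite set $J$ depending only on $a$, $\alpha$, and $n$. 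Because the two projections defining the Hecke correspondence $\tau$ are finite étale, the inclusion $D \subset \tau(C)$ forces $\tau(c) \cap D \neq \emptyset$ for every $c \in C$; since $D \subset S$, I deduce $\tau(c) \cap S \neq \emptyset$ for every $c \in C$, and in particular for $c = x$.

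For the final contradiction I revisit the choice of the base point $x \in S$ in the proof of Proposition \ref{proposition:avoidsurface}. By the equidistribution argument used there, $\tau^\vee(S) \neq \Agr$ and $\tau^\vee(S) \not\supset S$ for every non-trivial Hecke $\tau$, so each intersection $\tau^\vee(S) \cap S$ is a proper closed subvariety of $S$. Because $J$ is finite, the complement in $S$ of $\bigcup_{\tau \in J} \tau^\vee(S) \cap S$ is non-empty; taking $x$ in this complement forces $\tau(x) \cap S = \emptyset$ for every $\tau \in J$, contradicting the conclusion of the previous paragraph. The main obstacle is the descent step: controlling the Hecke relation through the nonlinear morphism $\Phi$, verifying that it descends to an honest Hecke correspondence on $\Agr$ drawn from the finite set $J$ rather than a more exotic algebraic correspondence, and that the relevant isogeny between $n$-th powers genuinely factors through an isogeny of the underlying abelian varieties; this is precisely where the combination of Lemmas \ref{lemma:endo-mono} and \ref{lemma:cancelling}, together with the primality of $r$ and the polarization bookkeeping dictated by $\alpha$, is used.
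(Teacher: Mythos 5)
Your argument is correct up to the step where you derive $D \subset \tau(C)$ for a non-trivial Hecke correspondence $\tau$ on $\Agr$; the reduction to $\Agr$ via $\tau_a^\vee \circ \tau_a$ and cancellation is more elaborate than the paper's direct use of Poincar\'e reducibility, but sound. The gap lies in the final step. You establish $\tau(c) \cap S \neq \emptyset$ for every $c \in C$ and then propose to derive a contradiction by ``revisiting the choice of the base point $x$ in the proof of Proposition \ref{proposition:avoidsurface},'' choosing $x$ to lie outside $\bigcup_{\tau \in J} \tau^\vee(S)$. But the corollary is stated for a curve $C$ already produced by Proposition \ref{proposition:avoidsurface}; you cannot re-select $x$ (and hence $C$) after the fact. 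What the proposition actually gives you is $\tau(C) \not\subset S$ for every non-trivial Hecke correspondence $\tau$, and the inclusion $D \subset \tau(C)$ together with $D \subset S$ does \emph{not} contradict that, since $\tau(C)$ could strictly contain $D$. To make your route work you would need to restate Proposition \ref{proposition:avoidsurface} with the strengthened conclusion that the chosen $x$ also avoids the finite list $\bigcup_{\tau \in J} \tau^\vee(S)$; as written, the step is circular.

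The missing observation that closes the gap cheaply is that $\tau(C)$ is irreducible: since $\tilde C$ is smooth, irreducible, and $\pi_1^{\et}(\tilde C) \to \pi_1^{\et}(\Agr)$ is surjective, the pullback $p_2^{-1}(C)$ is irreducible (cf.\ Lemma \ref{irred-hecke}), hence so is $\tau(C) = p_1(p_2^{-1}(C))$. Then $D \subset \tau(C)$ with both irreducible curves forces $\tau(C) = D \subset S$, which directly contradicts the conclusion of Proposition \ref{proposition:avoidsurface}. This is the paper's route. Your intermediate claim ``$\tau(c) \cap D \neq \emptyset$ for every $c$ because the projections are finite \'etale'' also implicitly relies on irreducibility of $p_2^{-1}(C)$ (otherwise a component of $p_2^{-1}(C) \cap p_1^{-1}(D)$ need not dominate $C$), so you should make that explicit; once you do, you might as well go all the way to $\tau(C) = D$ and avoid the circular finish.
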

\begin{proof}
Suppose that this was the case. Let $A \to C$ and $B \to D$ be the natural principally polarized abelian schemes. Then 
the geometric generic fibers of the abelian schemes $A^n \to C$ and $B^n \to D$, and hence of the abelian schemes $A \to C$ and $B\to D$, are isogenous; as $C$ has maximal monodromy, the polarization on $A \to C$ is unique (cf.\ Lemma \ref{lemma:endo-mono}), 
so the latter isogeny must 
respect polarizations up to scale. In particular, there is a Hecke operator $\tau$ on $\Agr$ such that $D \subset \tau(C)$. Note that $\tau(C)$ is irreducible because $\tilde C$ is smooth and irreducible and $\pi_1^{\textnormal{\'et}}(\tilde C) \to \pi_1^{\textnormal{\'et}}(\Agr)$ is surjective. Thus, $\tau(C) = D\subset S$, which contradicts Proposition \ref{proposition:avoidsurface}.   
\end{proof}

\section{Powers of abelian schemes over generic curves}

The goal of this section is to prove our second main theorem.

\begin{proof}[Proof of Theorem \ref{theorem:generic}]

Let $d \geq 4$ and $g=2^d$. Fix $N \geq 1$.  
Let $r \geq 3$ be a prime number. 
Let $C_{\rm{sp}} \subset \ca A_{g,[r]}$ be a Shimura curve of Mumford type; such a curve exists in view of Remark \ref{remark}. 

We claim that there exists a projective surface $S \subset \ca A_{g, [r]}$ defined over $\Qbar$, a projective birational morphism $\pi \colon \tilde S \to S$ with $\tilde S$ smooth, $\tilde S$ and $\pi$ defined over $\Qbar$, 
such that $C_{\rm{sp}} \subset S$ and such that the map $\pi_1^\et(\tilde S) \to \pi_1^\et(\Agr)$ is surjective. 
To prove this, we consider the Baily--Borel compactification $ \ca A_{g, [r]} \subset \smash{\overline{\ca A}_{g,[r]}}$. By repeatedly blowing up $\Agr$ and $\smash{\overline{\ca A}_{g,[r]}}$ in the singular locus of $C_{\rm{sp}}$, we obtain smooth varieties $\wt{\ca A} \subset \wt{\ca A}_{\rm{BB}}$ containing a smooth curve $\tilde C_{\rm{sp}} \subset \wt{\ca A} \subset \wt{\ca A}_{\rm{BB}}$, and compatible birational morphisms $\wt{\ca A}_{\rm{BB}} \to \overline{\ca A}_{g,[r]}$, $\wt{\ca A} \to \Agr$ and $\tilde C_{\rm{sp}} \to C_{\rm{sp}}$. 
By the Lefschetz hyperplane theorem for smooth quasi-projective varieties \cite{stratifiedmorse}, that concerns general hypersurfaces in a smooth quasi-projective variety, we can find a smooth curve $\tilde C_{\rm{mon}} \subset \wt{\ca A}$ such that $\pi_1^\et(\tilde C_{\rm{mon}}) \to \pi_1^\et(\wt{\ca A})$ is surjective and such that $\tilde C_{\rm{mon}} \cap \tilde C_{\rm{sp}} = \emptyset$. We then define $\tilde S \subset \wt{\ca A}_{\rm{BB}}$ as a general complete intersection of ample hypersurfaces containing the smooth disconnected curve $\tilde C_{\rm{sp}} \sqcup C_{\rm{mon}} \subset \wt{\ca A}$, such that $\tilde S$ is not contained in the exceptional locus of the birational morphism $\wt{\ca A}_{\rm{BB}} \to \overline{\ca A}_{g,[r]}$. Then $\tilde S$ is smooth by \cite{MR529493}. 
Moreover, $\tilde S$ is projective, and $\tilde S \subset \wt{\ca A}$ because the boundary $\wt{\ca A}_{\rm{BB}} \setminus \wt{\ca A}$ has codimension $g$. Since $\tilde S$ contains $\tilde C_{\rm{mon}}$, the map $\pi_1^\et(\tilde S) \to \pi_1^\et(\wt{\ca A})$ is surjective. As the map $\pi_1^\et(\wt{\ca A}) \to \pi_1^\et(\Agr)$ is an isomorphism, 
it follows that the map $\pi_1^\et(\tilde S) \to \pi_1^\et(\Agr)$ is  surjective. 
Define $S \subset \Agr$ as the image of $\tilde S$ under $\wt{\ca A} \to \Agr$. As $\tilde S \subset \wt{\ca A}$ is not contained in the exceptional locus, $\tilde S$ is the proper transform of $S$, hence $\tilde S \to S$ is projective and birational. This establishes the above claim. 

Our goal is to construct a curve $C \subset S$ with maximal monodromy defined over $\Qbar$, with the property that the geometric generic fiber of the induced abelian scheme $A \to C$ has no $n$-th power for $n \leq N$ isogenous to a Jacobian. This suffices to prove the theorem, because any such a curve defines a Hodge generic $\Qtbar$ point of $\Agr$, and hence of $\ca A_g$, that satisfies the conclusion of Theorem \ref{theorem:generic}. 

For each $n$ with $1 \leq n \leq N$, let $\overline L_n \in \Pic(\overline{\ca A}_{ng, [r]})$ be the Hodge line bundle on the Baily--Borel compactification $\overline{\ca A}_{ng,[r]} \supset \ca A_{ng,[r]}$. Let $k_n > 0 $ such that $\smash{\overline L_n^{\otimes k_n}} = \iota_n^\ast(\OO(1))$ for a closed embedding $\smash{\iota_n \colon \overline{\ca A}_{ng,[r]} \hookrightarrow \PP^{N_n}}$ into some projective space $\PP^{N_n}$.
Let $I_n$ be a finite set and $\overline H_{
n,i} \subset \overline{\ca A}_{ng,[r]} \subset \PP^{N_n}$ $(i \in I_n)$ be hypersurfaces such that, for $H_{n,i} = \overline H_{n,i} \cap \ca A_{ng,[r]}$, we have: \begin{align}\label{intersection}\ca T_{ng,[r]} = \bigcap_{\substack{1 \le n \le N \\ i \in I_n}} H_{n,i}.\end{align} 
Let $\deg(Z \subset \PP^{N_n})$ denote the degree of a subvariety of $\PP^{N_n}$. Then 
\begin{align} \label{align:deg:CS}
\deg(C \subset \PP^{N_n}) = k_n \cdot \deg(C) \quad \text{ and } \quad \deg(S \subset \PP^{N_n}) = k_n^2 \cdot \deg(S). 
\end{align}
Let \begin{align} \label{align:def:d}d = \max_{\substack{1 \le n \le N \\ i \in I_n}}\set{k_n \cdot \deg\left(\overline H_{n,i} \subset \PP^{N_n} \right)}.\end{align} 
By Proposition \ref{proposition:avoidsurface}, 
there exits a smooth curve $\tilde C \subset \tilde S$ defined over $\Qbar$, whose image we denote by $C \subset S$, such that $\tilde C \to C$ is birational,  $\pi_1^{\et}(\tilde C) \to \pi_1^\et(\tilde S)$ is surjective,  $\tau(C) \not\subset S$ for every non-trivial Hecke correspondence $\tau$, and \begin{align} \label{degree-strict}
\deg(C) > d \cdot \deg(S). 
\end{align}

Our goal is to show that $C$ works. 

Let $\eta \in C$ be the generic point of $C$ and let $A \to C$ be the induced abelian scheme. Assume for a contradiction that for some integer $n$ with $1 \leq n \leq N$, we have an isogeny \begin{align} \label{align:isogeny:JX}A_{\bar \eta}^n \to JX, \end{align} where $X$ is a compact type curve over $\Qtbarsmash$. Let $\lambda_{A, \bar \eta}$ (resp.\ $\lambda_{JX}$) be the principal polarization of $A_{\bar \eta}$ (resp.\ $JX$), and let $\mu$ be the polarization on $A^n_{\bar \eta}$ obtained by pulling back $\lambda_{JX}$ to $A^n_{\bar \eta}$ along \eqref{align:isogeny:JX}. As $C$ has maximal monodromy, we have $\End(A_{\bar \eta}) = \Z$, see Lemma \ref{lemma:endo-mono}. By \cite[Lemma 5.2]{dGFSchreieder-2025}, this implies that there exists a symmetric positive definite matrix $\alpha \in \rm{M}_n(\Z)$ such that $\mu = \lambda_\alpha$, see Notation \ref{notation:alpha}. Consider the morphism of moduli spaces
$$
\Phi \colon \ca A_{g,[r]} \to \ca A_{ng, \delta,[r]}, \quad \quad (A, \lambda_A, \bar \phi) \mapsto (A^n, \lambda_\alpha, \bar \phi_\alpha),
$$
where
$
\delta = (\delta_1, \dotsc, \delta_{ng})$ is the type of the polarization $\lambda_\alpha$.  
Consider the generic point $\eta \in C \subset \Agr$ of the curve $C$. 
There exists an element $a \in \GSp_{2ng}(\QQ)_+$ 
and a $\Qtbar$ point $\xi \in \ca A_{ng,[r]}$, with $(JX, \lambda_{JX})$ as underlying principally polarized abelian variety over $\Qtbar$, such that with respect to the correspondence $\ca A_{ng,[r]} \xleftarrow{p_1} {\ca A}_a \xrightarrow{p_2} \ca A_{ng,\delta,[r]}$ of Definition \ref{def:a}, we have
\begin{align*}
\xi \in \tau_a(\Phi(\eta)) = p_1(p_2^{-1}(\Phi(\eta))).
\end{align*}
We define $S' = \Phi(S)$ and $C' = \Phi(C)$, so that 
$
C' \subset S' \subset \ca A_{ng,\delta, [r]}. 
$
Let $C^\# \subset \tau_a(C')$ be the irreducible component of $\tau_a(C')$ with $\xi \in C^\#$. By Corollary \ref{singlecurve-upgraded}, there is no irreducible curve $D \neq C$ with $D\subset S$ such that $C^\# \subset \tau_a(\Phi(D))$. 
Therefore, by Theorem \ref{theorem:degree}, there exists an irreducible component $S^\# \subset \tau_a(S')$ with $C^\# \subset S^\#$, such that 
   \begin{align}\label{align:ratio:proof}
    \frac{\deg(C^\#)}{\deg(S^\#)} = \frac{\deg(C)}{\deg(S)}. 
    \end{align}
As $\xi \in \ca T_{ng,[r]}$, we have $C^\# \subset \ca T_{ng,[r]}$, hence \eqref{intersection} implies that
$
C^\# \subset H_{n, i}$ for each $ i \in I_n$. 
By construction, $S \subset \ca A_{g,[r]}$ contains a Shimura curve of Mumford type, hence by Theorem \ref{theorem:shimuracurve}, we have $
S^\# \not \subset \ca T_{ng,[r]} = \bigcap_{n, i \in I_n} H_{n,i}$. Thus,  $S^\# \not \subset H_{n,i}$ for some $i \in I_n$. Since the irreducible components of $S^\# \cap H_{n,i}$ have dimension at least one, they have dimension exactly one. In particular, the intersection $S^\# \cap H_{n,i} = S^\# \cap \overline H_{n,i}$ is transverse, so that $$\deg(S^\# \cap \overline H_{n,i} \subset \PP^{N_n}) = \deg(S^\# \subset \PP^{N_n}) \cdot \deg(\overline H_{n,i} \subset \PP^{N_n}).$$ 
As $C^\# \subset \overline H_{n,i}$, we have  
$
C^\# \subset S^\# \cap \overline H_{n, i} \subsetneq S^\#$, hence $C^\#$ is an irreducible component of the purely one-dimensional scheme $S^\# \cap \overline H_{n, i}$. Consequently, 
\begin{align*}
\begin{split}
\deg(C^\# \subset \PP^{N_n}) &\leq \deg( S^\# \cap \overline H_{n,i}  \subset \PP^{N_n}) \\
& = 
\deg(S^\# \subset \PP^{N_n}) \cdot \deg(\overline H_{n,i} \subset \PP^{N_n}),\end{split}
\end{align*}
and hence, by \eqref{align:deg:CS} and \eqref{align:def:d}, we get
\begin{align} \label{inequality:intersection}
\deg(C^\#) \leq k_n \cdot \deg(S^\#) \cdot  \deg(\overline H_{n,i} \subset \PP^{N_n}) \leq d \cdot \deg(S^\#). 
\end{align}
From \eqref{align:ratio:proof} and \eqref{inequality:intersection}, we deduce $
\deg(C) \leq d\cdot \deg(S)$, which contradicts \eqref{degree-strict}. Thus, 
the $n$-th power $A_{\bar \eta}^n$ of 
the geometric generic fiber $A_{\bar \eta}$ of the abelian scheme $A \to C$ is not isogenous to a Jacobian, for every $n \leq N$. 
\end{proof}

\printbibliography
\end{document}